\renewcommand\labelenumi{(\roman{enumi})}
\renewcommand\theenumi\labelenumi
\numberwithin{equation}{section}
\newtheorem{theorem}{Theorem}[section]
\newtheorem{condition}[theorem]{Condition}
\newtheorem{definition}[theorem]{Definition}
\newtheorem{example}[theorem]{Example}
\newtheorem{lemma}[theorem]{Lemma}
\newtheorem{proposition}[theorem]{Proposition}
\newtheorem{remark}[theorem]{Remark}
\definecolor{bittersweet}{rgb}{1.0, 0.44, 0.37}
\definecolor{electricultramarine}{rgb}{0.25, 0.0, 1.0}
\definecolor{rred}{RGB}{152,0,0}
\definecolor{carminepink}{rgb}{0.92, 0.3, 0.26}
\renewcommand\labelenumi{(\roman{enumi})}
\renewcommand\theenumi\labelenumi
\newcommand{\Ex}{\mathbb{E}}				  
\newcommand{\Prob}{\mathbb{P}}
\newcommand{\Var}{{\mathbb{V}}}
\newcommand{\s}{{\mathbb{S}^2}}
\newcommand{\ls}{L^2(\s)}
\newcommand{\LSr}{L^2(\s; \mathbb{R})}
\newcommand{\LSc}{L^2(\s; \mathbb{C})}
\newcommand{\LSh}{{\mathbb{H}}}
\newcommand{\op}{{\operatorname{op}}}
\newcommand{\ylm}{Y_{\ell, m}}
\newcommand{\ylmc}{\overline{Y_{\ell, m}}}
\renewcommand{\{}{\left\lbrace}
\renewcommand{\}}{\right\rbrace}
\newcommand{\SPHAR}{\operatorname{SPHAR}}   
\newcommand{\ARp}{\operatorname{AR}(p)}
\let\theta\vartheta
\let\itheta\temp
\def\1{\mathbbm{1}}
\newlength\mybibindent
\newcommand{\saveparinfos}{%
\edef\myindent{\the\parindent}%
\edef\myparskip{\the\parskip}}
\begin{document}

\author[1]{Alessia Caponera\footnote{e-mail: caponera@mat.uniroma2.it \\ The research was supported by MIUR Excellence Department Project awarded to the Department of Mathematics, University of Rome Tor Vergata, CUP E83C18000100006. The author is also grateful to Domenico Marinucci for many insightful discussions and suggestions.}}
\affil[1]{Dipartimento di Matematica, Tor Vergata Universit\`a di Roma}


\title{SPHARMA approximations for stationary functional time series on the sphere}


\date{\today}

\maketitle
\begin{abstract}
In this paper, we focus on isotropic and stationary sphere-cross-time random fields. We first introduce the class of spherical functional autoregressive-moving average processes (SPHARMA), which extend in a natural way the spherical functional autoregressions (SPHAR) recently studied in \cite{cm19,cdv19}; more importantly, we then show that SPHAR and SPHARMA processes of sufficiently large order can be exploited to approximate \textit{every} isotropic and stationary sphere-cross-time random field, thus generalizing to this infinite-dimensional framework some classical results on real-valued stationary processes. Further characterizations in terms of functional spectral representation theorems and Wold-like decompositions are also established.

\textit{Keywords:} time-varying spherical random fields, functional
time series, double spectral representation, spherical harmonics, spherical
functional ARMA\newline
\textit{MSC 2010 subject classications}: primary 62M15; secondary 62M10, 60G15, 60F05, 62M40, 60G60.
\end{abstract}

\affil[1]{Dipartimento di Matematica, Tor Vergata Universit\`a di Roma}




\section{Introduction}

\label{sec:intro}

Over the last few years, the analysis of sphere-cross-time isotropic and stationary random fields has drawn a considerable amount of attention, due to strong motivations arising in Climate and Atmospheric Sciences, Geophysics, Astrophysics and Cosmology, and many other areas of research, see for instance 
\cite{castruccio2016, castruccio2013, Ch:05, Leonenko, Porcu18, porcu, white2019} and the references therein. A lot of efforts has been spent, in particular, on the characterization of covariance functions and their use for parametric inference, see \cite{bergporcu, gneiting, JJ:15, jun, Porcu18, porcu, white2019} and in particular \cite{PFN:20} for a comprehensive review. 

More recently, spherical functional autoregressive models (SPHAR) have been introduced and studied in \cite{cm19,cdv19}, where they were shown to provide a flexible tool for the analysis of time-dependent spherical data. In particular, in \cite{cm19} estimators based on a functional $L^2$-minimization criterion have been investigated, whereas in \cite{cdv19} LASSO-type penalized extensions were considered; indeed, various forms of concentration properties, laws of large numbers, quantitative and functional central limit theorems were established under broad general assumptions.

The purpose of the present work is to show that the spherical autoregression framework, and its natural generalization to spherical autoregressive moving averages (SPHARMA), provides a very general approximation for \textit{every} isotropic and stationary sphere-cross-time random field. In particular, after introducing rigorously the SPHARMA class we show in our main results below (Theorems \ref{theo::approx-ma}, \ref{theo::approx-ar} and \ref{theo::approx-l2}) that any isotropic and stationary spherical random field can be approximated, in terms of their harmonic transforms and in the $L^2(\Omega)$ sense, by a SPHARMA or SPHAR process of sufficiently large order. 

Of course, sphere-cross-time data can also be viewed as functional time series. In this sense, we can exploit the rich machinery recently developed in this area; the reader is referred to \cite{bosq, BB:07, HK:12, Hsing, RS:05} for an overview. In particular, 
as for the finite-dimensional case, the analysis of second-order properties is a crucial tool in the characterization of stationary functional processes. More precisely,
some recent works have focused on a functional frequency-domain approach, encoding the complete second-order structure of stationary sequences, via
the spectral density operator, i.e. the Fourier transform of the collection of autocovariance operators,
\begin{align*}
& \mathscr{F}_\lambda := \frac{1}{2\pi} \sum_{t \in \mathbb{Z} }
e^{-i\lambda t} \mathscr{R}_t, \qquad \lambda \in [-\pi,\pi].
\end{align*}
Such operators were first investigated in \cite{Panaretos2}; then, \cite{Panaretos} derived a Cram{\'e}r--Karhunen--Lo{\`e}ve representation for \textit{short-memory} processes;
we specialize this result to the spherical case to make this paper self-contained and complete. Loosely speaking, this representation first decomposes the series into an integral
of uncorrelated frequency components (Cram{\'e}r representation), each of which is in turn expanded in a Karhunen--Lo{\`e}ve series, by means of
eigenfunctions of $\mathscr{F}_\lambda$; this way at the same time it provides a full description of the second-order dynamics and it gives some insights into an optimal finite-dimensional
representation. Very recently, \cite{vdE:20} establish the functional versions of Herglotz's Theorem and the Cram{\'e}r representation under more general assumptions, which cover also stationary Hilbert-valued time series with discontinuities in the spectral measure and \textit{long-memory} processes. Overall, such results laid the foundations for a variety of frequency
domain-based inference procedures in functional time series, e.g. \cite{Aue,Hormann}.

Following these lines, we start with the investigation of the spectral characteristics of time-varying isotropic spherical random fields; under these circumstances it is possible to derive a double spectral decomposition based on
spherical harmonics. Then, we focus on harmonic properties of functional autoregressive-moving average processes (see \cite{bosq, cm19}), defined as random elements of $L^2(\mathbb{S}^2)
$ (as mentioned above, recently \cite{cm19, cdv19} have addressed the estimation of the functional linear autoregressive 
operators). Our approach is related to the extensive literature on linear stationary functional processes; however most of this literature is built on the estimation of autocovariance operators in the time domain, see among others \cite{Bo:02,DS:05,Ma:02}.

After building this background, we arrive at our main results; as anticipated, we prove that, under very broad assumptions, any isotropic and stationary spherical process can be approximated arbitrary well by a SPHARMA model of sufficiently large degree. Our results can be viewed as the infinite-dimensional counterpart of the rational approximations of spectral densities and the Wold decomposition in the analysis of standard, real-valued stationary processes, see for instance
 \cite[Chapters 4 and 5]{brockwelldavis}.

\paragraph{Plan of the paper.} 
The paper is organized as follows. In Section \ref{sec::spectral-char}, we describe the spectral properties of isotropic stationary sphere-cross-time random fields, while in Section \ref{sec::spectral-repr} we provide a further characterization in terms functional spectral representation theorems. In Section \ref{sec::spharma}, we introduce rigorously the SPHARMA class. Section \ref{sec::main-results} contains the main results of this work,
that is, SPHARMA approximations of stationary and isotropic spherical random field, in terms of their harmonic transforms and in the $L^2(\Omega)$ sense, together with a Wold-like decomposition. Lastly, Section \ref{sec::proofs} collects the proofs.

\paragraph{Notation.}
We will denote with $\mathfrak{B}(\s)$ the Borel $\sigma$-field on the unit sphere and with $\LSc:=L^2(\s, dx; \mathbb{C})$ the Hilbert space of square-integrable complex-valued functions on $\s$ endowed with the usual inner product $\langle f, g \rangle_{\LSc} = \int_\s f(x)\overline{g(x)}d x$. $\|\cdot\|_{\LSc}$ will be the norm induced by $\langle \cdot , \cdot \rangle_{\LSc}$; to simplify the notation, sometimes we will replace the subscript $\LSc$ with $2$. 
Moreover, the restriction of $\LSc$ to real-valued functions will be denoted by $\LSr$. We will also use the same notation for $\textnormal{L}^2(\s \times \s; \mathbb{C})$ and $\textnormal{L}^2(\s \times \s; \mathbb{R})$.
Let $\LSh$ be the Hilbert space of $\LSc$-valued random elements with finite second moment, that is, $f \in \LSh$ is such that $\Ex \| f \|^2_{\LSh} < \infty$. The associated inner product is defined as $\langle f, g \rangle_{\LSh}= \Ex \langle f, g \rangle_{\LSc},  \text{ for } f,g \in \mathbb{H}$.
For $u,v \in \LSc$, the tensor product $u \otimes v$ is defined to be the mapping that takes any element $f \in \LSc$ to $u \langle f, v\rangle \in \LSc.$
$\|\mathscr{T}\|_{\operatorname{TR}}$ is the trace (or nuclear) norm of the operator $\mathscr{T}$, see \cite{Hsing}. For a real- or complex-valued function $f$ defined on a set $D$, we define $\|f\|_\infty := \sup_{x \in D} |f(x)|$. $\delta_a^b$
is the Kronecker delta function.

\section{Spectral characteristics}\label{sec::spectral-char}

Consider the collection of random variables $\{T(x,t), \ (x,t) \in \s \times \mathbb{Z} \}$ defined on the probability space $(\Omega, \mathfrak{F}, \Prob)$. For every fixed $t \in \mathbb{Z}$, $\{T(x, t), \ x \in \s \}$ is a spherical random field as defined in \cite[Chapter 5]{MP:11}; recall that we are implicitly assuming measurability with respect to the product $\sigma$-field $\mathfrak{B}(\s) \times \mathfrak{F}$. We name $\{T(x,t), \ (x,t) \in \s \times \mathbb{Z} \}$ \emph{space-time spherical random field}. For simplicity, we will assume that $\Ex[T(x,t)] = 0,$ for all $(x,t) \in \s \times \mathbb{Z}$. 


For the rest of the paper, we are going to consider space-time spherical random fields which are jointly isotropic (in the spatial component) and stationary (in the temporal component). To this purpose, we give the following definition (see also \cite{clarke}).

\begin{definition}\label{def::isotropy-stationarity}
We say that the zero-mean space-time spherical random field $\{T(x,t), \ (x,t) \in \s \times \mathbb{Z} \}$ is isotropic stationary if $\Ex |T(x,t) |^2 < \infty$, for all $(x,t) \in \s \times \mathbb{Z}$, and
\begin{equation*}
\Ex[T(x,t)T(y,s)] =\Ex[T(gx, t+h) T(gy, s+h)],
\end{equation*}
for all $x,y \in \s, \ g \in SO(3), \ t,s,h \in \mathbb{Z}$.
\end{definition}

Thus, in this case we can define the autocovariance kernel at lag $t$
\begin{equation}\label{covkern}
r_t(x,y) = \Ex[T(x, t)T(y, 0)], \qquad x,y \in \s, \ t \in \mathbb{Z},
\end{equation}
which is also Hilbert-Schmidt, i.e. $r_t(\cdot,\cdot ) \in \textnormal{L}^2(\s \times \s, \mathbb{R})$, and the corresponding operator $\mathscr{R}_t: \LSc \to \LSc$ induced by right integration, the autocovariance operator at lag $t$, 
\begin{equation*}
(\mathscr{R}_t h) (\cdot) = \int_\s r_t(\cdot,y) h(y) d y, \qquad h \in \LSc.
\end{equation*}



In functional data analysis, it is usual to model random processes as random elements of some separable Hilbert space. Under the joint isotropy-stationarity assumption, the sequences of spherical random fields we are considering can be seen as a sequence of random elements of $L^2(\mathbb{S}^2)$. 
More formally, for any fixed $t \in \mathbb{Z}$, there exists a random element $T_t$ of $L^2(\mathbb{S}^2)$ such that $T(\cdot, t) = T_t \ \mathbb{P}-$a.s., indeed 
$$
\mathbb{E} \int_{\mathbb{S}^2} |T(x,t)|^2dx = 4\pi \mathbb{E}[T(x_0,t)^2] < \infty,
$$
for any $x_0 \in \mathbb{S}^2$.
This implies that there exists a $\mathfrak{F}$-measurable set $\Omega'$ of $\mathbb{P}$-probability 1 such that, for every $\omega \in \Omega'$,  $T(\cdot,t,\omega)$ is an element of $L^2(\mathbb{S}^2)$. 

\begin{remark}
Recall that if $\{ T(x), \ x \in \s \}$ is jointly measurable and $T(\cdot, \omega) \in \LSr$ for each $\omega$, then the mapping $\omega \mapsto T(\cdot, \omega)$ is a random element of $\LSr$ (see \cite[Theorem 7.4.1]{Hsing}).
\end{remark}

Thus, $\{T_t, \ t \in \mathbb{Z}\}$ is a stationary sequence of random elements in $\LSr$, with mean zero and $\Ex\| T_0 \|^2 <\infty$; see for instance \cite[Definition 2.4]{bosq} and $\mathscr{R}_t$ coincides with the autocovariance operator defined as the Bochner integral
\begin{equation*}
\Ex \left [ T_t \otimes T_0 \right ] := \int_{\Omega} T_t \otimes T_0 \, d  \Prob.
\end{equation*}

In this setup, it is possible to show that the following representation holds
\begin{equation*}
T(x,t) =\sum_{\ell=0}^{\infty} \sum_{m=-\ell}^{\ell}a_{\ell,m}(t)Y_{\ell,m}(x),
\end{equation*}
in the $L^2(\Omega)$ sense for every $(x,t) \in \mathbb{S}^2\times \mathbb{Z}$ and in the $L^2(\s \times \Omega)$ sense for every $t \in \mathbb{Z}$; the sequence $\{Y_{\ell,m}, \ell \ge, m=-\ell,\dots,\ell\}$ is a standard orthonormal basis for $\LSc$ of spherical harmonics, whereas, 
for fixed $t \in \mathbb{Z}$, $\{a_{\ell,m}(t), \ell \ge, m=-\ell,\dots,\ell \}$ is a triangular array of zero-mean uncorrelated random coefficients defined as $$a_{\ell,m}(t)= \int_{\mathbb{S}^2} T(x,t) \overline{Y_{\ell,m}(x)} d x.$$
This result can be understood as a direct application of the spectral theorem for isotropic random fields on the sphere, see \cite[Chapter 5 and in particular Theorem 5.13]{MP:11}. In this sense, it does not give insights on the temporal dynamics of the process and, hence, on its complete second-order structure. 

%
%

Following \cite{Panaretos2}, we shall use the conditions below to define the spectral density kernels and the spectral density operators and to prove part of our main results in Section \ref{sec::main-results}. Under these conditions, we are also able to give first a Functional Cram{\'e}r Representation which involves a $\LSc$-valued orthogonal increment process, and then to obtain a double spectral representation with respect to both space and time, see Section \ref{sec::spectral-repr} below. We stress that in this section and in Section \ref{sec::spectral-repr}, as in \cite{Panaretos}, it is not assumed any other prior structural properties for the stationary sequence (e.g., linearity or Gaussianity).

\begin{condition}\label{cond::summability} For an isotropic stationary space-time spherical random field $\{T(x,t), \ (x,t) \in \mathbb{S}^2 \times \mathbb{Z}\}$ with continuous covariance kernels \eqref{covkern} on $\s \times \s$, consider one of the following conditions:
\begin{enumerate}
\item the autocovariance kernels satisfy $\sum_{t \in \mathbb{Z}} \| r_t \|_2  < \infty$; 
\item 
the autocovariance operators satisfy $\sum_{t\in \mathbb{Z}} \|\mathscr{R}_t \|_{\operatorname{TR}} < \infty$.
\end{enumerate}
\end{condition}

\begin{remark}
Notice that if $r_0(\cdot, \cdot)$ is continuous on $\s \times \s$, then each random field $\{T(x, t), \ x \in \s\}$,  $t \in \mathbb{Z},$ is mean-square continuous. 
Moreover, if we assume strong isotropy and stationarity, that is, $T(g\cdot, \cdot + \tau) \overset{d}{=}T(\cdot, \cdot), \ \forall g \in SO(3), \tau \in \mathbb{Z}$, continuity of all kernels $r_t(\cdot, \cdot), \ t \in \mathbb{Z},$ follows from mean-square continuity of $\{T(x, t), \ x \in \s\}, \ t \in \mathbb{Z};$ see \cite{marinucci2013mean}.
\end{remark}


In \cite{Panaretos2} there is an extensive discussion on the role of such assumptions. 
Similarly here, under Condition \ref{cond::summability} $(i)$, it is possible to define the spectral density kernel at frequency $\lambda \in [-\pi,\pi]$,
\begin{equation*}
f_\lambda(\cdot, \cdot) := \frac{1}{2\pi} \sum_{t \in \mathbb{Z} } e^{-i\lambda t} r_t(\cdot, \cdot),
\end{equation*}
where the convergence is in $\| \cdot \|_2$. It is uniformly bounded and also uniformly continuous in $\lambda$ with respect to $\| \cdot \|_2$. 
The spectral density operator $\mathscr{F}_\lambda: \LSc \to \LSc$, the operator induced by the spectral density kernel through right-integration, is self-adjoint and nonnegative definite for all $\lambda \in \mathbb{R}$.
Moreover, the following inversion formula holds in the $L^2$ sense:
\begin{equation}\label{inversion-formula}
\int_0^{2\pi} f_\alpha (\cdot, \cdot) e^{it\alpha} d\alpha = r_t(\cdot, \cdot).
\end{equation}


Under Condition \ref{cond::summability} $(ii)$, we can define the spectral density operator at frequency $\lambda \in [-\pi,\pi]$
\begin{align*}
& \mathscr{F}_\lambda := \frac{1}{2\pi} \sum_{t \in \mathbb{Z} } e^{-i\lambda t} \mathscr{R}_t,
\end{align*}
where the convergence holds in nuclear norm. 
$\mathscr{F}_\lambda$ is trace class and $\| \mathscr{F}_\lambda\|_{\operatorname{TR}}\le \sum_{t \in \mathbb{Z}} \| \mathscr{R}_t \|_{\operatorname{TR}}< \infty$, $\lambda \mapsto \| \mathscr{F}_\lambda\|_{\operatorname{TR}}$ is uniformly continuous and 
\begin{equation*}
\|\mathscr{F}_\lambda \|_{\operatorname{TR} }= \int_\s f_\lambda(x, x) d x.
\end{equation*}
The reader is referred to \cite{Panaretos2} for proofs of these assertions.

\begin{remark}
Condition \ref{cond::summability}  is strictly related to the concept of short memory stationary processes; indeed, stationary processes which exhibit short-range dependence are those with absolutely summable autocovariances and, hence, bounded and continuous spectral density, e.g., stationary ARMA processes. For functional time series, this translates into an "absolutely summable" autocovariance operators and a "bounded and continuous" spectral density operator, that is, their nuclear norms are, respectively, absolutely summable, and bounded and continuous.
\end{remark}

However, exploiting joint isotropy-stationarity of the space-time spherical random field, we can specialize all the previous results and obtain a neat expression for our quantities of interest. First of all, the sequence of zero-mean random coefficients satisfies
\begin{equation*}
\qquad \Ex [a_{\ell, m}(t) \overline{a_{\ell', m'}}(s)] = C_\ell(t-s) \delta_\ell^{\ell'} \delta_m^{m'}, \qquad t,s \in \mathbb{Z},
\end{equation*}
(see Equation \eqref{alm-stationarity} in Proof of Proposition \ref{theo:functional_cramer}) and, as a consequence of Schoenberg's Theorem \cite{schoenberg}, the covariance kernel is shown to have a spectral decomposition in terms of Legendre polynomials, i.e.,
\begin{equation}\label{gamma_space-time}
r_t(x, y)= \sum_{\ell=0}^\infty \frac{2\ell+1}{4\pi} C_\ell(t) P_{\ell}( \langle x, y \rangle ),
\end{equation}
where $\langle \cdot, \cdot \rangle$ denotes the standard inner product in $\mathbb{R}^3$, $P_\ell(\cdot)$ denotes the $\ell$-th Legendre polynomial \cite[Section 4.7]{szego} and the series is uniformly convergent.
\begin{remark}
Following the works \cite{schoenberg} and \cite{gneiting2}, in \cite{bergporcu} the authors give a mathematical characterization of covariance functions for isotropic stationary random fields over $\s \times \mathbb{R}$.
In \cite{clarke} the regularity properties of such covariance functions have been investigated for the case where a double Karhunen--Lo{\`e}ve expansion holds. Examples of random fields satisfying this decomposition are found in the Appendix of \cite{porcu}.
\end{remark}
As a consequence of \eqref{gamma_space-time}, 
\begin{align*}
&f_\lambda(x,y)= \sum_{\ell=0}^{\infty}  \frac{2\ell+1}{4\pi} f_\ell(\lambda) P_\ell(\langle x, y \rangle ), 
\end{align*}
in $\| \cdot\|_2$ under Condition \ref{cond::summability} $(i)$
and in $\| \cdot \|_\infty$ under $(ii)$. It follows that $\mathscr{R}_t$ and $\mathscr{F}_\lambda$ satisfy
$$
\mathscr{R}_t \ylm = C_\ell(t) \ylm, \qquad \mathscr{F}_\lambda \ylm = f_\ell(\lambda) \ylm,
$$
that is, the $\ylm$'s are eigenfunctions of both $\mathscr{R}_t$ and $\mathscr{F}_\lambda$, and the $C_\ell(t)$'s and $f_\ell(\lambda)$'s are the associated eigenvalues. Moreover, by the inversion formula \eqref{inversion-formula}, we have that $f_\ell(\lambda) := \frac{1}{2\pi}  \sum_{t \in \mathbb{Z}} e^{-it\lambda} C_\ell (t)$.

The eigenvalues $f_\ell(\lambda)$ are also uniformly bounded and uniformly continuous in $\lambda$ with respect to $\| \cdot \|_2$. Indeed,
$$
0 \le f_\ell(\lambda) \le  \| f_\lambda \|_2 \le M, \quad  \text{for all } \lambda,
$$
and, given $\epsilon > 0$, there exists $\delta > 0$ such that
$$
|\lambda_1 - \lambda_2| > \delta \implies |f_\ell(\lambda_1) - f_\ell(\lambda_2)| \le \| f_{\lambda_1} -f_{\lambda_2} \|_2 < \epsilon.
$$
Clearly, under $(ii)$, the trace class norm is given by
$$
\|\mathscr{F}_\lambda \|_{\operatorname{TR}}= \sum_{\ell=0}^\infty (2\ell+1) f_\ell(\lambda).
$$

\section{Spectral representations}\label{sec::spectral-repr}

This section builds on the earlier works \cite{Panaretos2, Panaretos} and it provides some results on a double spectral representation, with respect to both the temporal and spatial components of the field.
The main purpose is to study these objects, trying to simultaneously capture the surface structure (spatial component) as well as the dynamics in time (temporal component); what in \cite{Panaretos} is called \emph{within/between curve dynamics}. We then specilize the results in \cite{Panaretos}, for dependent random functions on the interval $[0,1]$, to the case of the sphere, making this paper self-contained and complete.

The following proposition is the analogue of Theorem 2.1 in \cite{Panaretos} and it can be seen as the infinite-dimensional version of the well-known spectral representation of real-valued stationary processes.

\begin{proposition}[Spherical Functional Cram{\'e}r Representation]\label{theo:functional_cramer} Under Condition \ref{cond::summability} $(ii)$, $T_t$ admits the representation
\begin{equation}\label{cramer}
T_t=\int_{-\pi }^{\pi }e^{it\lambda }dZ_{\lambda }, \qquad \textnormal{a.s. in } L^2(\mathbb{S}^2),
\end{equation}
where, for fixed $\lambda$, $Z_\lambda$ is a random element of $\LSc$ with $\Ex \| Z_\lambda \|_2^2 = \sum_{\ell=0}^\infty (2\ell+1)  \int_{-\pi}^\lambda f_\ell (\nu) d \nu$, and the process $\{Z_\lambda,  -\pi \le \lambda \le \pi\}$ has orthogonal increments:
\begin{equation}
\Ex \left \langle Z_{\lambda_1} - Z_{\lambda_2}, Z_{\lambda_3}- Z_{\lambda_4} \right \rangle_2=0, \qquad \lambda_1  > \lambda_2 \ge \lambda_3 > \lambda_4.
\end{equation}
The representation \eqref{cramer} is called the Cram{\'e}r representation of $T_t$, and the stochastic integral involved can be understood as a Riemann-Stieltjes limit, in the sense that
\begin{equation*}
\Ex \left \| T_t - \sum_{j=1}^J e^{i\lambda_jt}(Z_{\lambda_{j+1}} - Z_{\lambda_{j}})\right \|_{\LSc}^2 \to 0, \qquad J \to \infty,
\end{equation*}
where $-\pi = \lambda_1 < \cdots < \lambda_{J+1}= \pi$ and $\max_{j=1,\dots,J} |\lambda_{j+1} - \lambda_j| \to 0$ as $J\to \infty$.
\end{proposition}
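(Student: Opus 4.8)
The plan is to diagonalise the problem in the spherical-harmonic basis, apply the classical scalar spectral representation theorem to each sequence of harmonic coefficients, and then reassemble the resulting one-dimensional orthogonal-increment processes into a single $\LSc$-valued process whose increments remain orthogonal.

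First I would fix a pair $(\ell,m)$ and regard $\{a_{\ell,m}(t)\}_{t\in\mathbb{Z}}$ as a scalar, zero-mean, weakly stationary sequence. By the established relation $\Ex[a_{\ell,m}(t)\overline{a_{\ell',m'}(s)}]=C_\ell(t-s)\delta_\ell^{\ell'}\delta_m^{m'}$ its autocovariance is $C_\ell(\cdot)$, which does not depend on $m$. Since $\mathscr{R}_t Y_{\ell,m}=C_\ell(t)Y_{\ell,m}$ forces $\TRnorm{\mathscr{R}_t}=\sum_{\ell\ge0}(2\ell+1)|C_\ell(t)|$, Condition \ref{cond::summability} $(ii)$ yields $\sum_{t\in\mathbb{Z}}|C_\ell(t)|<\infty$, so the spectral distribution of the sequence is absolutely continuous with bounded nonnegative density $f_\ell$. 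The classical spectral representation theorem for scalar stationary sequences (see, e.g., \cite[Chapter 4]{brockwelldavis}) then produces an orthogonal-increment process $\{Z_{\ell,m}(\lambda),\ -\pi\le\lambda\le\pi\}$ with $Z_{\ell,m}(-\pi)=0$, $\Ex|Z_{\ell,m}(\lambda)-Z_{\ell,m}(\mu)|^2=\int_\mu^\lambda f_\ell(\nu)\,d\nu$ for $\mu\le\lambda$, and $a_{\ell,m}(t)=\int_{-\pi}^\pi e^{it\lambda}\,dZ_{\ell,m}(\lambda)$; concretely, $Z_{\ell,m}(\lambda)$ is the image of the indicator $\mathbbm{1}_{[-\pi,\lambda]}$ under the spectral isometry from $L^2([-\pi,\pi],f_\ell\,d\lambda)$ onto the $L^2(\Omega)$-closure $\mathcal{H}_{\ell,m}$ of the linear span of $\{a_{\ell,m}(t):t\in\mathbb{Z}\}$.

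The \emph{key gluing step} is joint orthogonality across harmonics. Because $\Ex[a_{\ell,m}(t)\overline{a_{\ell',m'}(s)}]=0$ whenever $(\ell,m)\ne(\ell',m')$, the subspaces $\mathcal{H}_{\ell,m}$ are mutually orthogonal in $L^2(\Omega)$; as $Z_{\ell,m}(\lambda)\in\mathcal{H}_{\ell,m}$, every increment of $Z_{\ell,m}$ is uncorrelated with every increment of $Z_{\ell',m'}$ for $(\ell,m)\ne(\ell',m')$. I would then set
\begin{equation*}
Z_\lambda:=\sum_{\ell=0}^{\infty}\sum_{m=-\ell}^{\ell}Z_{\ell,m}(\lambda)\,Y_{\ell,m},
\end{equation*}
and verify convergence in $\LSh$: by orthonormality of the $Y_{\ell,m}$ and the cross-harmonic orthogonality just noted, the partial sums are Cauchy, with
\begin{equation*}
\Ex\|Z_\lambda\|_2^2=\sum_{\ell=0}^{\infty}\sum_{m=-\ell}^{\ell}\Ex|Z_{\ell,m}(\lambda)|^2=\sum_{\ell=0}^{\infty}(2\ell+1)\int_{-\pi}^{\lambda}f_\ell(\nu)\,d\nu\le\sum_{\ell=0}^{\infty}(2\ell+1)C_\ell(0)=\Ex\|T_0\|_2^2<\infty,
\end{equation*}
which simultaneously delivers the stated variance formula. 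The orthogonal-increment property in $\LSh$ then follows by expanding $\Ex\langle Z_{\lambda_1}-Z_{\lambda_2},Z_{\lambda_3}-Z_{\lambda_4}\rangle_2$ over the basis: the cross-harmonic terms vanish by the gluing step, and the diagonal terms vanish by scalar orthogonality on the disjoint intervals $(\lambda_2,\lambda_1]$ and $(\lambda_4,\lambda_3]$.

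Finally I would identify the stochastic integral with $T_t$. For a partition $-\pi=\lambda_1<\cdots<\lambda_{J+1}=\pi$ the Riemann--Stieltjes sum $S_J=\sum_{j=1}^J e^{i\lambda_j t}(Z_{\lambda_{j+1}}-Z_{\lambda_j})$ has $(\ell,m)$-th coordinate exactly the scalar approximating sum for $a_{\ell,m}(t)$, so the spectral isometry gives
\begin{equation*}
\Ex\|T_t-S_J\|_2^2=\sum_{\ell=0}^{\infty}(2\ell+1)\int_{-\pi}^{\pi}\big|e^{it\lambda}-g_J(\lambda)\big|^2 f_\ell(\lambda)\,d\lambda,
\end{equation*}
where $g_J(\lambda)=\sum_{j=1}^J e^{i\lambda_j t}\mathbbm{1}_{(\lambda_j,\lambda_{j+1}]}(\lambda)$. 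Since $\|e^{it\,\cdot}-g_J\|_\infty\to0$ as the mesh tends to $0$ (uniform continuity of $\lambda\mapsto e^{it\lambda}$) and $\sum_{\ell}(2\ell+1)\int_{-\pi}^{\pi}f_\ell=\Ex\|T_0\|_2^2<\infty$, the right-hand side is bounded by $\|e^{it\,\cdot}-g_J\|_\infty^2\,\Ex\|T_0\|_2^2\to0$, which establishes the convergence in the $L^2(\Omega)$ sense stated in the proposition. I expect the genuine obstacle to be the passage from the coordinatewise scalar representations to the Hilbert-space-valued one: pinning down the cross-harmonic orthogonality and, above all, the uniform-in-$\ell$ control that lets one interchange the summation over harmonics with the Riemann--Stieltjes limit; the one-dimensional ingredients are entirely classical.
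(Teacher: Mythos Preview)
Your proof is correct, but it takes a genuinely different route from the paper's. The paper follows the abstract Kolmogorov-isomorphism approach of \cite{Panaretos}: it builds a single isometry $E$ from the closed span $\mathbb{M}\subset\LSh$ of the $T_t$'s onto $L^2([-\pi,\pi],\TRnorm{\mathscr{F}_\nu}\,d\nu)$ via $T_t\mapsto e^{it\cdot}$, and then defines $Z_\lambda:=E^{-1}(\mathbbm{1}_{[-\pi,\lambda)})$ directly at the functional level. Your argument instead diagonalises first, invokes the scalar spectral theorem for each harmonic coefficient sequence $\{a_{\ell,m}(t)\}$, and glues the resulting one-dimensional increment processes back together as $Z_\lambda=\sum_{\ell,m}Z_{\ell,m}(\lambda)Y_{\ell,m}$.

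Both are complete. The paper's approach is basis-free and would work verbatim for any $\LSc$-valued stationary sequence satisfying Condition~\ref{cond::summability}~$(ii)$, without using isotropy; the price is a more abstract construction. Your approach exploits from the outset that $\mathscr{F}_\lambda$ is diagonal in the spherical-harmonic basis, which makes the argument more elementary and constructive, and in effect collapses the paper's Propositions~\ref{theo:functional_cramer} and~\ref{theo:ckl} into a single step (your $Z_{\ell,m}(\lambda)$ is exactly the $\alpha_{\ell,m}(\lambda)$ that the paper later identifies with $\langle Z_\lambda,Y_{\ell,m}\rangle_2$). A minor remark: the cross-harmonic orthogonality you highlight as the ``key gluing step'' is true and conceptually clarifying, but Parseval in $\LSc$ alone already eliminates all cross terms in the variance, increment, and Riemann--Stieltjes computations; the heavy lifting is really the uniform bound $\sum_\ell(2\ell+1)\int f_\ell=\Ex\|T_0\|_2^2<\infty$, which you use correctly.
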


Now, we are going to establish a double spectral representation result, by showing the relation between the orthogonal increment process $\{\alpha_{\ell, m}(\lambda), -\pi \le \lambda\le \pi \}$ and $\{Z_\lambda, -\pi \le \lambda\le \pi \}$. It is worth to notice that, under Condition \ref{cond::summability}, all the results presented in \cite{Panaretos} can be easily extended to our framework, including the so-called Cram{\'e}r--Karhunen--Lo{\`e}ve Representation. Such a representation decomposes the space-time spherical random field into uncorrelated functional frequency components, exploiting an orthonormal basis for $\LSc$ made up of eigenfunctions of the spectral density operator $\mathscr{F}_\lambda$. 
However, in the anisotropic case, these eigenfunctions are unknown and have to be estimated.
The stronger conditions allows to apply directly theorems from \cite{Panaretos}, since we have an explicit eigenvalue-eigenfunction decomposition of the spectral density operator in terms of spherical harmonics. 


\begin{proposition}[Spherical Cram{\'e}r--Karhunen--Lo{\`e}ve Representation]\label{theo:ckl} Under Condition \ref{cond::summability} $(ii)$, for every $t \in \mathbb{Z}$ and every $x \in \s$,
\begin{equation*}
\Ex \left | T(x,t) - \sum_{\ell=0}^L \sum_{m=-\ell}^\ell \int_{-\pi}^\pi e^{it\lambda} d\alpha_{\ell ,m}(\lambda)  \ylm (x) \right |^2 \to 0, \qquad L\to \infty,
\end{equation*}
with $\alpha_{\ell,m}(\lambda): = \langle Z_\lambda, Y_{\ell,m} \rangle_2$ and $Z_\lambda$ as defined in Proposition \ref{theo:functional_cramer},
$$
\mathbb{E}[\alpha_{\ell,m}(\omega) \alpha_{\ell',m'}(\beta)]=  \int_{-\pi}^{\min (\omega, \beta)}  f_\ell(\alpha) d \alpha \,\delta_\ell^{\ell'} \delta_{m}^{m'}.
$$
\end{proposition}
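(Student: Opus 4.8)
The plan is to derive the statement by projecting the Functional Cram\'er Representation of Proposition~\ref{theo:functional_cramer} onto the spherical harmonic basis and then identifying the resulting scalar increment processes $\alpha_{\ell,m}$ with the spectral increment processes of the harmonic coefficients $a_{\ell,m}(t)$. The first step is to show that, for every fixed $(\ell,m)$ and every $t\in\mathbb{Z}$,
\begin{equation*}
a_{\ell,m}(t)=\langle T_t,\ylm\rangle_2=\int_{-\pi}^{\pi}e^{it\lambda}\,d\alpha_{\ell,m}(\lambda),\qquad \alpha_{\ell,m}(\lambda):=\langle Z_\lambda,\ylm\rangle_2.
\end{equation*}
This follows by applying the bounded linear functional $\langle\,\cdot\,,\ylm\rangle_2$ to the Riemann--Stieltjes approximating sums appearing in \eqref{cramer}: those sums converge to $T_t$ in the sense that $\Ex\|\cdot\|_{\LSc}^2\to0$, and, by continuity of the functional, the projection commutes with the limit, with $\langle Z_{\lambda_{j+1}}-Z_{\lambda_j},\ylm\rangle_2=\alpha_{\ell,m}(\lambda_{j+1})-\alpha_{\ell,m}(\lambda_j)$.

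The truncation convergence is then immediate: substituting this identity into the spherical harmonic expansion $T(x,t)=\sum_{\ell,m}a_{\ell,m}(t)\ylm(x)$, which holds in the $L^2(\Omega)$ sense for every fixed $(x,t)$ as recalled in Section~\ref{sec::spectral-char}, yields precisely that the partial sums $\sum_{\ell=0}^{L}\sum_{m=-\ell}^{\ell}\big(\int_{-\pi}^{\pi}e^{it\lambda}\,d\alpha_{\ell,m}(\lambda)\big)\ylm(x)$ converge to $T(x,t)$ in $L^2(\Omega)$ as $L\to\infty$.

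It remains to compute $\Ex[\alpha_{\ell,m}(\omega)\overline{\alpha_{\ell',m'}(\beta)}]$, which is the core of the argument. For fixed $(\ell,m)$, the sequence $\{a_{\ell,m}(t)\}_{t\in\mathbb{Z}}$ is a scalar, zero-mean stationary sequence with autocovariance $\Ex[a_{\ell,m}(t)\overline{a_{\ell,m}(0)}]=C_\ell(t)$ and, by the relation $f_\ell(\lambda)=\frac{1}{2\pi}\sum_{t}e^{-it\lambda}C_\ell(t)$, spectral density $f_\ell$, so that $C_\ell(t)=\int_{-\pi}^{\pi}e^{it\lambda}f_\ell(\lambda)\,d\lambda$. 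The classical scalar Cram\'er representation together with the uniqueness of its orthogonal increment process (see \cite[Chapter~4]{brockwelldavis}) then identifies $\alpha_{\ell,m}$ with that increment process, giving $\Ex[\alpha_{\ell,m}(\omega)\overline{\alpha_{\ell,m}(\beta)}]=\int_{-\pi}^{\min(\omega,\beta)}f_\ell(\alpha)\,d\alpha$. For the off-diagonal terms, with $(\ell,m)\neq(\ell',m')$, I would regard $(a_{\ell,m},a_{\ell',m'})$ as a bivariate stationary sequence whose cross-covariance vanishes identically, since $\Ex[a_{\ell,m}(t)\overline{a_{\ell',m'}(s)}]=C_\ell(t-s)\delta_\ell^{\ell'}\delta_m^{m'}=0$; the associated cross-spectral measure is then null, forcing the increment processes to be mutually orthogonal and hence $\Ex[\alpha_{\ell,m}(\omega)\overline{\alpha_{\ell',m'}(\beta)}]=0$. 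Combining the two cases yields the stated formula with the factor $\delta_\ell^{\ell'}\delta_m^{m'}$.

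The step I expect to be the main obstacle is precisely this covariance computation. The orthogonal increment property recorded in Proposition~\ref{theo:functional_cramer} concerns only the full inner product $\langle Z_{\lambda_1}-Z_{\lambda_2},Z_{\lambda_3}-Z_{\lambda_4}\rangle_2$, and does not on its own resolve the increments harmonic-by-harmonic; the delicate point is to pass consistently from the functional process $Z_\lambda$ to the individual scalar increment processes. This is exactly what the uniqueness of the scalar Cram\'er representation delivers: both the projected functional representation and the intrinsic scalar representation reconstruct the same coefficient sequence $a_{\ell,m}(t)$, and must therefore share the same orthogonal increment process, which pins down all the required first- and second-order relations.
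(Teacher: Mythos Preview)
Your overall route---project the functional Cram\'er representation onto $\ylm$, then identify $\alpha_{\ell,m}=\langle Z_\lambda,\ylm\rangle_2$ with the scalar spectral increment process of $\{a_{\ell,m}(t)\}$---is the same strategy the paper follows. The paper, however, does not invoke an abstract ``uniqueness of the Cram\'er increment process'': the standard uniqueness statement applies to \emph{orthogonal-increment} processes, and at the point where you invoke it you have not yet shown that $\alpha_{\ell,m}$ has orthogonal increments (Proposition~\ref{theo:functional_cramer} only gives $\Ex\langle Z_{\lambda_1}-Z_{\lambda_2},Z_{\lambda_3}-Z_{\lambda_4}\rangle_2=0$, which controls the trace, not the individual harmonic components). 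The paper closes this gap explicitly: it takes the Fourier coefficients $\alpha_j=\tfrac{1}{2\pi}\int_{-\pi}^\pi\mathbb{1}_{[-\pi,\lambda)}(\nu)e^{-ij\nu}\,d\nu$, notes that $\sum_{|j|\le k}\alpha_j a_{\ell,m}(t_j)$ converges to the scalar increment process, and shows that the \emph{same} linear combination $\sum_{|j|\le k}\alpha_j T_{t_j}=E^{-1}(h_k)$ converges to $Z_\lambda$ in $\LSh$ (via the isometry $E$ and $\|\mathscr{F}_\nu\|_{\operatorname{TR}}\le const$), hence its $\ylm$-projection converges to $\langle Z_\lambda,\ylm\rangle_2$. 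Your appeal to uniqueness is morally this computation, but as written it is circular; unpacking it leads exactly to the paper's argument.

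For the covariance formula your approach genuinely differs. You obtain the diagonal from the scalar spectral measure and the off-diagonal by viewing $(a_{\ell,m},a_{\ell',m'})$ as a bivariate stationary sequence with vanishing cross-covariances and hence null cross-spectral measure. The paper instead imports from \cite{Panaretos} the kernel identity $\Ex[Z_\omega(x)\overline{Z_\beta(y)}]=\int_{-\pi}^{\min(\omega,\beta)}f_\alpha(x,y)\,d\alpha$, expands it under isotropy as $\sum_\ell\frac{2\ell+1}{4\pi}\int_{-\pi}^{\min(\omega,\beta)}f_\ell\,d\alpha\,P_\ell(\langle x,y\rangle)$, and reads off all the $\Ex[\alpha_{\ell,m}(\omega)\overline{\alpha_{\ell',m'}(\beta)}]$ at once. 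Your route is more self-contained (it does not rely on the external kernel formula), while the paper's handles the diagonal and off-diagonal uniformly in one step.
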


\begin{remark}
Note that the effective dimensionality of each frequency component is captured by the eigenvalues of the spectral density operators. The approximation error is then given by
\begin{equation*}
\Ex \left | T(x,t) - \sum_{\ell=0}^L \sum_{m=-\ell}^\ell \int_{-\pi}^\pi e^{it\lambda} d\alpha_{\ell ,m}(\lambda)  \ylm (x) \right |^2 = \sum_{\ell > L} \frac{2\ell + 1}{4\pi} \int_{-\pi}^\pi f_\ell(\lambda) d\lambda,
\end{equation*}
see also \cite[Remark 3.10]{Panaretos}.
\end{remark}


\section{Spherical functional ARMA}\label{sec::spharma}

In this section, we extend the spherical functional autoregressions (SPHAR), first introduced in \cite{cm19}, to include a moving-average term in the error. This leads to the definition of the so-called spherical functional autoregressive-moving average processes (SPHARMA). The main purpose here is to study the existence and uniqueness of an isotropic stationary solution of the functional autoregressive-moving average equation, see also \cite[Chapter 5]{bosq}.

We first recall the definitions of \emph{spherical white noise} and \textit{isotropic kernel operator}, see \cite{cm19, cdv19}. 

\begin{definition}
\label{SphericalWhiteNoise} The collection of random variables $ \lbrace Z(x,t), (x,t)\in {\mathbb{S}^{2}}\times \mathbb{Z} \rbrace $ is said to be a spherical white noise if:
\begin{enumerate}
\item for every fixed $t\in \mathbb{Z}$, $\{Z(x,t),\ x \in \s \}$ is a zero-mean
isotropic random field, with covariance kernel%
\begin{equation*}
r_{Z}(x,y)=\sum_{\ell =0}^{\infty }\frac{2\ell +1}{4\pi }C_{\ell
;Z}P_{\ell }(\left\langle x,y\right\rangle ), \quad \sum_{\ell
=0}^{\infty }\frac{2\ell +1}{4\pi }C_{\ell; Z}<\infty,
\end{equation*}%
$\left \lbrace  C_{\ell; Z}\right \rbrace$ denoting as usual the angular power spectrum of $Z(\cdot,t)$;
\item for every $t\neq s,$ $\Ex[a_{\ell,m;Z}(t) \overline{a_{\ell',m';Z}}(s)] = 0$, for all $\ell, \ell' \ge 0$, $|m|\le \ell$, $|m'|\le \ell'$, where
$$
a_{\ell,m;Z}(t) = \int_\s Z(x,t) \ylmc(x) dx.
$$
\end{enumerate}
We shall write $Z \sim SWN(0, \{C_{\ell; Z}\})$.
Moreover, $ \lbrace Z(x,t), (x,t)\in {\mathbb{S}^{2}}\times \mathbb{Z} \rbrace $ is said to be a strong spherical white noise if it satisfies $(i)$ and
the random fields $\{Z(x,t), \ x \in \s \}$, $t \in \mathbb{Z}$, are independent and identically distributed.
\end{definition}

%
%

\begin{definition}\label{def::isotropic-kernel-op}
A spherical isotropic kernel operator is an application $\Phi
:L^{2}(\s)\rightarrow L^{2}(\s)$ which satisfies%
\begin{equation*}
(\Phi f)(x)=\int_{\s}k(\left\langle x,y\right\rangle )f(y)dy, \quad x \in \s,
\end{equation*}
for some continuous $k: [-1,1] \to \mathbb{R}.$
\end{definition}


The following representation holds in the $L^{2}$-sense for the kernel
associated with $\Phi$:
\begin{equation}
k(\left\langle x,y\right\rangle )=\sum_{\ell =0}^{\infty }\phi _{\ell }\frac{%
2\ell +1}{4\pi }P_{\ell }(\left\langle x,y\right\rangle ).
\label{kernel_expansion}
\end{equation}%
The coefficients $ \lbrace \phi _{\ell },\ \ell \geq 0 \rbrace $ corresponds to the eigenvalues of the
operator $\Phi $ and the associated eigenfunctions are the family of
spherical harmonics $\left \lbrace  Y_{\ell ,m}\right \rbrace $, yielding%
\begin{equation*}
\Phi Y_{\ell ,m}=\phi _{\ell }Y_{\ell ,m},
\end{equation*}%
Thus, it holds $\sum_{\ell }(2\ell +1)\phi _{\ell }^{2}<\infty $, and hence
this operator is Hilbert-Schmidt (see, e.g., \cite{Hsing}). In \cite{cm19, cdv19}, the authors also consider trace class operators, namely, such that $\sum_{\ell
}(2\ell +1)|\phi _{\ell }|<\infty$, for which the representation \eqref{kernel_expansion} holds pointwise for every $x, y \in \s$.

Now, we focus on a space-time spherical random field $\left \lbrace  T(x,t), (x,t) \in \mathbb{S}^{2} \times \mathbb{Z}\right \rbrace$, as defined in Section \ref{sec:intro}, for which it holds almost surely $T(\cdot,t) \in \LSr$, $t \in \mathbb{Z}$.

\begin{definition}
\label{SPHARMA(p,q)} $\left \lbrace  T(x,t), (x,t) \in \mathbb{S}^{2} \times \mathbb{Z}\right \rbrace $ is said to be a $\operatorname{SPHARMA}(p, q)$ process if there exist $p$
isotropic kernel operators $\left \lbrace  \Phi _{1},\dots ,\Phi _{p}\right \rbrace$, $q$
isotropic kernel operators $\left \lbrace  \Theta _{1},\dots ,\Theta _{q}\right \rbrace$ and a spherical white noise $\left \lbrace  Z(x,t), (x,t) \in \mathbb{S}^{2} \times \mathbb{Z}\right \rbrace $ such that
\begin{equation}
T(x, t)-\sum_{j=1}^p (\Phi _{j}T(\cdot, t-j))(x) = Z(x, t) + \sum_{j=1}^q (\Theta _{j}Z(\cdot, t-j))(x), \label{defar}
\end{equation}%
for all $(x,t)\in \mathbb{S}^{2}\times \mathbb{Z}$, the equality holding
both in the $L^{2}(\Omega )$ and in the $L^{2}( \s \times \Omega)$
sense.
\end{definition}

\begin{remark}
Note that, following \cite{cm19}, the solution process (as well as the spherical white noise) is defined pointwise, i.e., for
each $(x,t)$ there exists a random variable on $(\Omega ,\Im ,%
\mathbb{P})$ such that the identity \eqref{defar} holds. Alternatively, following \cite[page 72]{bosq}, one could give the definition in terms of random elements of $\ls$. The two approaches are actually equivalent, because we are dealing with jointly-measurable mean-square continuous random fields.
\end{remark}

\begin{remark}
Both \cite{cm19, cdv19} introduce two estimation procedures for the spherical autoregressive kernels $\{k_j, j = 1,\dots,p\}$ and investigate asymptotic properties of the corresponding nonparametric estimators.
Specifically, in \cite{cm19}, the authors focus on the solutions of a functional $L^2$-minimization problem, while, in \cite{cdv19}, they add a convex penalty term to study LASSO-type estimators under sparsity assumptions.
\end{remark}

Similarly to \cite{cm19, cdv19}, it is possible to write
\begin{equation}
a_{\ell ,m}(t) - \sum_{j=1}^p \phi _{\ell ;j}a_{\ell ,m}(t-j) =a_{\ell ,m;Z}(t) + \sum_{j=1}^q \itheta_{\ell; j} a_{\ell ,m;Z}(t-j); \label{arp}
\end{equation}
where the coefficients $\left \lbrace  \phi _{\ell;j}, \ \ell \ge 0, \ j=1,\dots,p\right \rbrace$ and $\left \lbrace  \itheta_{\ell;j}, \ \ell \ge 0,\ j = 1,\dots, q\right \rbrace$ are respectively the eigenvalues of the operators $\{\Phi_j, \ j=1,\dots,p\}$ and $\{\Theta_j, j=1,\dots,q\}.$ 


Now, define the polynomials $\phi _{\ell }:\mathbb{C\rightarrow C}$ and $\itheta_{\ell }:\mathbb{C\rightarrow C}$, $\
\ell \geq 0$, such that
\begin{equation}
\phi _{\ell }(z)=1-\phi _{\ell ;1}z-\cdots -\phi _{\ell ;p}z^{p}, \qquad \itheta_{\ell }(z)=1 + \itheta_{\ell ;1}z-\cdots -\itheta_{\ell ;q}z^{q};
\label{AssoPoly}
\end{equation}
note that the actual degrees can change with $\ell$.  
Clearly, particular cases of the $\operatorname{SPHARMA}(p,q)$ process can be obtained by letting one of the two sequences constant and equal to $1$. For instance, if $\phi_\ell(z) \equiv 1,$ for all $\ell \ge 0$, we obtain a spherical functional moving-average process of order $q$ (or $\operatorname{SPHMA}(q)$),
whereas, If $\itheta_\ell(z) \equiv 1,$ for all $\ell \ge 0$, then we have the so-called spherical functional autoregressive process of order $p$ (or $\SPHAR(p)$), see \cite{cm19,cdv19}.

\begin{condition}[Causality/Stationarity]
\label{stationarity} The two sequences of polynomials in Equation \eqref{AssoPoly} are such that $\phi_\ell(\cdot)$ and $\itheta_\ell(\cdot)$ have no common zeroes and 
\begin{equation}
|z| \le 1 \ \Rightarrow \ \phi _{\ell }(z)\ne 0.
\end{equation}
More explicitly,
there are no roots in the unit disk, for all $\ell \geq 0$.
\end{condition}

\begin{remark}
We say that a $\operatorname{SPHARMA}(p,q)$ process satisfying Condition \ref{stationarity} is causal. Similarly, if 
$|z| \le 1 \ \Rightarrow \ \itheta _{\ell }(z)\ne 0,$ for all $\ell \ge 0$,
we call it invertible.
\end{remark}

\begin{remark}\label{rmk:infinf}
Since $\sum_{\ell }(2\ell +1)\phi _{\ell;j }^{2}<\infty $ for all $j=1,\dots,p$, Condition \ref{stationarity} actually ensures that there exists $\delta > 0$ such that $$|z| < 1 + \delta \ \Rightarrow \ \phi _{\ell }(z)\ne 0,\qquad \text{for all } \ell \geq 0.$$
Indeed, it is possible to show that if we consider all the polynomials $\phi_\ell(\cdot)$ of degree $d_\ell \ne 0$ with distinct roots $\xi_{\ell;1},\dots,\xi_{\ell; r_\ell}$, then
\begin{equation*}
|\xi_{\ell;j}| \ge \xi_\ast > 1, \qquad \text{uniformly over $\ell$}.
\end{equation*}
\end{remark}

\begin{condition}[Identifiability]
\label{Identifiability} The spherical white noise
process $\left \lbrace  Z(x,t), \ (x,t) \in \s \times \mathbb{Z}\right \rbrace  $ is such that $C_{\ell ;Z}>0$, for all $\ell
\ge 0.$
\end{condition}


Under this assumptions, Condition \ref{cond::summability} holds and the eigenvalues of the spectral density operators $\mathscr{F}_\lambda$ are defined as
$$
f_\ell(\lambda) = \frac{C_{\ell;Z}}{2\pi} \left | \frac{\itheta_\ell(e^{i\lambda})}{\phi_\ell(e^{i\lambda})} \right|^2, \qquad \lambda \in [-\pi,\pi].
$$

\begin{example}[$\SPHAR(1)$]\label{example::sphar1} The family of random variables $ \lbrace T(x,t), \ (x, t) \in \s \times \mathbb{Z}  \rbrace $ is a spherical autoregressive process of order one if for all pairs $(x,t)\in \s\times \mathbb{Z}$
it satisfies%
\begin{equation}\label{sphar1}
T(x, t)=(\Phi _{1}T(\cdot, t-1))(x)+Z(x, t);
\end{equation}%
in this case, Condition \ref{stationarity} simply becomes $|\phi _{\ell
}|< 1$, for all $\ell \geq 0$. Moreover,
$$
f_\ell(\lambda) = \frac{C_{\ell;Z}}{2\pi}\frac{1}{1-2\phi_\ell \cos \lambda + \phi_\ell^2},  \qquad \lambda \in [-\pi,\pi].
$$
\end{example}

The proof of the following statement is given already in \cite{bosq} for the simplest case of order one  Hilbert-valued autoregressive processes, but here we construct explicitly the solution with a slightly different argument for completeness.

\begin{proposition}\label{theo:solution}
Under Conditions \ref{stationarity} and \ref{Identifiability}, the unique isotropic stationary solution to \eqref{defar} is
given by%
\begin{equation}\label{eq::stationary-isotropic-solution}
T(x,t) = \lim_{k\rightarrow \infty }T_{k}(x,t), \qquad T_{k}(x,t)=\sum_{\ell
=0}^{L_k}\sum_{m=-\ell }^{\ell }\sum_{j=0}^{k}\psi_{\ell;j}a_{\ell
m;Z}(t-j)Y_{\ell m}(x),
\end{equation}
in the $L^2(\Omega)$ and $L^2(\s \times \Omega)$ sense. The coefficients $\{\psi_{\ell;j}\}$ are determined by the relation
\begin{equation}
\psi_\ell(z) = \sum_{j=0}^\infty \psi_{\ell;j} z^j = \itheta_\ell(z) / \phi_\ell (z), \qquad |z| \le 1.
\end{equation}
\end{proposition}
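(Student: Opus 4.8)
The plan is to pass to harmonic coefficients and reduce the functional equation \eqref{defar} to the family of scalar $\operatorname{ARMA}(p,q)$ recursions \eqref{arp}, one for each pair $(\ell,m)$, and then to reassemble the scalar solutions into a bona fide isotropic stationary field. First I would fix $\ell$ and study the rational function $\psi_\ell(z)=\itheta_\ell(z)/\phi_\ell(z)$. By Condition \ref{stationarity} together with Remark \ref{rmk:infinf}, $\phi_\ell$ has no zeros in the disk $\{|z|<1+\delta\}$, so $\psi_\ell$ is holomorphic there and admits a power-series expansion $\sum_{j\ge0}\psi_{\ell;j}z^j$ with geometrically decaying coefficients. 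Matching coefficients in the identity $\phi_\ell(z)\psi_\ell(z)=\itheta_\ell(z)$ produces the recursion determining $\{\psi_{\ell;j}\}$ and shows that the formal candidate $a_{\ell,m}(t)=\sum_{j\ge0}\psi_{\ell;j}\,a_{\ell,m;Z}(t-j)$ solves \eqref{arp} for every $(\ell,m)$.

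The heart of the matter is uniform control over $\ell$. Using the uniform root separation $|\xi_{\ell;r}|\ge \xi_\ast>1$ from Remark \ref{rmk:infinf}, and that each $\phi_\ell$ has degree at most $p$, I would factor $\phi_\ell$ and bound
\begin{equation*}
|\phi_\ell(e^{i\lambda})|=\prod_{r}\left|1-e^{i\lambda}/\xi_{\ell;r}\right|\ge \left(1-\xi_\ast^{-1}\right)^{p}=:c_0>0,
\end{equation*}
uniformly in $\ell$ and $\lambda$, the product running over the (at most $p$) roots of $\phi_\ell$ counted with multiplicity. Parseval's identity on the unit circle then yields
\begin{equation*}
\sum_{j\ge0}\psi_{\ell;j}^2=\frac{1}{2\pi}\int_{-\pi}^{\pi}\left|\frac{\itheta_\ell(e^{i\lambda})}{\phi_\ell(e^{i\lambda})}\right|^2 d\lambda\le c_0^{-2}\left(1+\sum_{j=1}^{q}\itheta_{\ell;j}^2\right).
\end{equation*}
Because the $\itheta_{\ell;j}$ are uniformly bounded (being eigenvalues of the Hilbert--Schmidt operators $\Theta_j$) and $\sum_{\ell}(2\ell+1)\ClZ<\infty$ by Definition \ref{SphericalWhiteNoise}, this gives the decisive summability
\begin{equation*}
\sum_{\ell\ge0}(2\ell+1)\,\ClZ\sum_{j\ge0}\psi_{\ell;j}^2<\infty .
\end{equation*}

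Existence then follows. Using orthonormality of the $\ylm$, the orthogonality of the white-noise coefficients across distinct lags and distinct $(\ell,m)$ (Definition \ref{SphericalWhiteNoise}$(ii)$ together with isotropy), and the addition theorem $\sum_{m}|\ylm(x)|^2=\frac{2\ell+1}{4\pi}$, a direct computation gives
\begin{equation*}
\Ex|T_k(x,t)|^2=\sum_{\ell=0}^{L_k}\frac{2\ell+1}{4\pi}\,\ClZ\sum_{j=0}^{k}\psi_{\ell;j}^2 ,
\end{equation*}
which is nondecreasing in $k$ and bounded by the finite sum above; since the increments $T_k-T_{k'}$ are orthogonal sums whose squared norm equals the added terms, $\{T_k\}$ is Cauchy in $L^2(\Omega)$ for each $(x,t)$ and, the right-hand side being independent of $x$, also in $L^2(\s\times\Omega)$. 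The limit $T$ is the sought solution: substituting its expansion into \eqref{defar} and using $\phi_\ell\psi_\ell=\itheta_\ell$ verifies the equation, while isotropy and stationarity are inherited from the fact that the $\psi_{\ell;j}$ depend only on $\ell$ and the driving noise is itself isotropic stationary. (Condition \ref{Identifiability} is not actually needed for the construction; it serves only to make the eigenvalues of $\mathscr{F}_\lambda$ strictly positive.)

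For uniqueness, let $T'$ be any isotropic stationary solution, with harmonic coefficients $a'_{\ell,m}(t)$. For fixed $(\ell,m)$ these satisfy the same scalar equation \eqref{arp}; iterating the autoregressive part and invoking Condition \ref{stationarity} (all roots off the closed unit disk, so the companion map contracts in $L^2(\Omega)$ after finitely many steps), the standard one-dimensional argument shows the remainder term vanishes in $L^2(\Omega)$ and forces $a'_{\ell,m}(t)=\sum_{j\ge0}\psi_{\ell;j}\,a_{\ell,m;Z}(t-j)=a_{\ell,m}(t)$ almost surely; hence $T'=T$ in $L^2(\s\times\Omega)$. I expect the uniform-in-$\ell$ estimate to be the main obstacle: each scalar recursion is classical, but guaranteeing that the assembled object is a genuine isotropic field with summable angular power spectrum rests entirely on the uniform separation of the roots of the $\phi_\ell$ from the unit disk supplied by Remark \ref{rmk:infinf}; without this uniformity the sum over $\ell$ need not converge.
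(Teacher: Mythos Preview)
Your proposal is correct and follows essentially the same route as the paper: reduction to the scalar $\operatorname{ARMA}$ recursions \eqref{arp}, the uniform lower bound $|\phi_\ell(e^{i\lambda})|\ge(1-\xi_\ast^{-1})^p$ from Remark \ref{rmk:infinf}, Parseval on the circle to control $\sum_j\psi_{\ell;j}^2$ uniformly in $\ell$, and then the Cauchy argument driven by $\sum_\ell(2\ell+1)\ClZ<\infty$; uniqueness is likewise obtained componentwise from the classical scalar result. Your remark that Condition \ref{Identifiability} is not used in the construction is accurate and worth noting.
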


\begin{remark}
Notice that the isotropic stationary solutions of the $\SPHAR(1)$ equation \eqref{sphar1} take the form
\begin{equation*}
T(\cdot , t)=\sum_{j=0}^{\infty } \Phi _{1}^{j} Z(\cdot , t),
\end{equation*}
and Condition \ref{stationarity} is satisfied if and only if the operator norm $\left \|\Phi_1 \right \|_{\op} := \max_{\ell \ge 0} |\phi_\ell| < 1,$ see also \cite[Section 3.4]{bosq}.
\end{remark}

\section{Main results}\label{sec::main-results}

\subsection{SPHAR and SPHMA approximations of spectral density operators}

In what follows, we show that for any real-valued isotropic stationary random field, with spectral density kernels $f_\lambda$ satisfying Condition \ref{cond::summability} $(i)$, it is possible to find both a causal $\SPHAR(p)$ process and an invertible $\operatorname{SPHMA}(q)$ process whose spectral density kernels are arbitrarily close to $f_\lambda$ in the $L^2$ norm. This suggests that
the original process can be approximated in some sense by either a $\SPHAR(p)$ or a $\operatorname{SPHMA}(q)$
process. Similar results hold for the spectral density operator $\mathscr{F}_\lambda$ in the trace class norm under the stronger Condition \ref{cond::summability} $(ii)$.

Below we will denote with $T_L$ a \emph{band-limited} space-time spherical random field, namely such that it can be expanded in terms of finitely many spherical harmonics, up to a finite multipole $\ell = L$.
\begin{theorem}\label{theo::approx-ma} If $f_\lambda(\cdot, \cdot)$ is a spectral density kernel of an isotropic stationary process, satisfying Condition \ref{cond::summability} $(i)$, then $\forall \epsilon > 0$ there exists an invertible $\operatorname{SPHMA}(q)$ process
$$
T_L(x,t) = Z_L(x,t) + (\Theta_1 Z_L(\cdot,t-1))(x)+ \dots + (\Theta_q Z_L(\cdot,t-q))(x), \qquad Z_L \sim \text{SWN}(0, \{\sigma^2_\ell\}),
$$
with spectral density kernel $\tilde{f}_{\lambda}(\cdot, \cdot)$ such that 
$$
\|\tilde{f}_{\lambda} - f_\lambda\|_2 \le \epsilon \quad \text{for all } \lambda \in [-\pi,\pi],
$$
where $\sigma^2_\ell = (1+\itheta_{\ell;1}^2+\dots+\itheta_{\ell;q}^2)^{-1} \int_{-\pi}^{\pi} f_{\ell} (\lambda) d\lambda, \ \ell=0,\dots,L.$
Under only Condition \ref{cond::summability} $(ii)$, 
$$
\| \tilde{\mathscr{F}}_\lambda - \mathscr{F}_\lambda \|_{\operatorname{TR}}  \le \epsilon \quad \text{for all } \lambda \in [-\pi,\pi].
$$
\end{theorem}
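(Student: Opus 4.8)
The plan is to diagonalise everything in the spherical-harmonic basis and thereby reduce the infinite-dimensional problem to a family of classical, scalar spectral-density approximations, one for each multipole $\ell$. Recall from Section~\ref{sec::spectral-char} that $\mathscr{R}_t Y_{\ell,m} = C_\ell(t) Y_{\ell,m}$ and $\mathscr{F}_\lambda Y_{\ell,m} = f_\ell(\lambda) Y_{\ell,m}$, so any band-limited $\operatorname{SPHMA}(q)$ process $T_L$ has a spectral density operator simultaneously diagonalised by the $Y_{\ell,m}$, with eigenvalues $\tilde{f}_\ell(\lambda) = \frac{\sigma_\ell^2}{2\pi}\lvert\itheta_\ell(e^{i\lambda})\rvert^2$ for $\ell\le L$ and $\tilde{f}_\ell(\lambda)=0$ for $\ell>L$. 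Using the addition theorem and orthonormality of $\{Y_{\ell,m}\}$ one obtains the exact identities
\begin{equation*}
\|\tilde{f}_\lambda - f_\lambda\|_2^2 = \sum_{\ell \ge 0}(2\ell+1)\,\lvert \tilde{f}_\ell(\lambda)-f_\ell(\lambda)\rvert^2, \qquad \|\tilde{\mathscr{F}}_\lambda - \mathscr{F}_\lambda\|_{\operatorname{TR}} = \sum_{\ell\ge 0}(2\ell+1)\,\lvert \tilde{f}_\ell(\lambda)-f_\ell(\lambda)\rvert,
\end{equation*}
so it suffices to control the scalar discrepancies $\lvert\tilde{f}_\ell-f_\ell\rvert$ uniformly in $\lambda$, with the $(2\ell+1)$-weighted summation.

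Second, I would fix the band limit $L$ by estimating the tail lost to truncation. Since $f_\ell(\lambda)=\frac{1}{2\pi}\sum_{t}e^{-it\lambda}C_\ell(t)$, one has $\sup_\lambda f_\ell(\lambda)\le\frac{1}{2\pi}\sum_t\lvert C_\ell(t)\rvert$. Under Condition~\ref{cond::summability}$(ii)$ the double sum $\sum_\ell(2\ell+1)\sum_t\lvert C_\ell(t)\rvert=\sum_t\|\mathscr{R}_t\|_{\operatorname{TR}}$ is finite, so $\sum_{\ell>L}(2\ell+1)\sup_\lambda f_\ell(\lambda)\to 0$ and $L$ can be chosen so this tail is $<\epsilon/2$. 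Under Condition~\ref{cond::summability}$(i)$ the analogue is done in $\ell^2$: a Minkowski inequality for the $(2\ell+1)$-weighted norm gives $\big(\sum_\ell(2\ell+1)(\sum_t\lvert C_\ell(t)\rvert)^2\big)^{1/2}\le\sum_t\|r_t\|_2<\infty$, whence $\sum_{\ell>L}(2\ell+1)\sup_\lambda f_\ell(\lambda)^2\to 0$ and the squared tail is made $<\epsilon^2/2$.

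Third, for each of the finitely many $\ell\le L$ I would invoke the classical scalar construction. Each $f_\ell(\cdot)$ is continuous, nonnegative and even, so Fej\'er's theorem gives that the Ces\`aro mean $\tilde{f}_\ell:=f_\ell\ast F_q$ — a nonnegative trigonometric polynomial of degree $q$ — converges to $f_\ell$ uniformly as $q\to\infty$. Because the Fej\'er kernel $F_q>0$ almost everywhere, $\tilde{f}_\ell$ is \emph{strictly} positive whenever $f_\ell\not\equiv0$, so by the Fej\'er--Riesz theorem it factors (with real coefficients, since $f_\ell$ is real and even) as $\tilde{f}_\ell(\lambda)=\frac{\sigma_\ell^2}{2\pi}\lvert\itheta_\ell(e^{i\lambda})\rvert^2$ with $\itheta_\ell$ having \emph{no} roots in the closed unit disk, i.e. the resulting process is invertible. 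Integrating over $[-\pi,\pi]$ and matching the lag-$0$ coefficient, which the Ces\`aro mean preserves, yields exactly $\sigma_\ell^2=(1+\itheta_{\ell;1}^2+\dots+\itheta_{\ell;q}^2)^{-1}\int_{-\pi}^\pi f_\ell(\lambda)\,d\lambda$, as stated. Picking $q_\ell$ so that $\sup_\lambda\lvert\tilde{f}_\ell-f_\ell\rvert$ is small enough and setting $q=\max_{\ell\le L}q_\ell$ (degrees may vary with $\ell$), the finite part $\sum_{\ell\le L}(2\ell+1)\sup_\lambda\lvert\tilde{f}_\ell-f_\ell\rvert$, respectively its square, is made $<\epsilon/2$, respectively $<\epsilon^2/2$. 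Combining with the tail bound gives $\|\tilde{\mathscr{F}}_\lambda-\mathscr{F}_\lambda\|_{\operatorname{TR}}\le\epsilon$ under $(ii)$ and, after a square root, $\|\tilde{f}_\lambda-f_\lambda\|_2\le\epsilon$ under $(i)$, uniformly in $\lambda$.

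The main obstacle I anticipate is the scalar step executed \emph{with invertibility and uniform control at once}: one must guarantee that the spectral factor $\itheta_\ell$ has all roots strictly outside the unit disk rather than merely on or outside it — this is precisely what strict positivity of the Fej\'er mean buys us — and one must ensure the approximation error is uniform in $\lambda$ and not only in $L^2(d\lambda)$. The genuinely spherical ingredient, and the second delicate point, is the \emph{uniform-in-$\lambda$} tail estimate over $\ell$ that licenses the band-limited truncation; this is where Condition~\ref{cond::summability} enters decisively, via the Minkowski interchange under $(i)$ and the absolute summability of $\sum_t\|\mathscr{R}_t\|_{\operatorname{TR}}$ under $(ii)$.
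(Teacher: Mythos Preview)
Your proposal is correct and follows essentially the same strategy as the paper: diagonalise in spherical harmonics, truncate the tail in $\ell$ uniformly in $\lambda$ via the Minkowski interchange under Condition~\ref{cond::summability}$(i)$ (respectively the trace-norm summability under $(ii)$), and then approximate each of the finitely many scalar spectral densities $f_\ell$ by an invertible $\operatorname{MA}(q_\ell)$ spectral density. The only difference is cosmetic: where the paper simply cites \cite[Theorem~4.4.3 and Corollary~4.4.1]{brockwelldavis} for the scalar step, you spell out its content --- Ces\`aro means via the Fej\'er kernel to get a strictly positive trigonometric polynomial uniformly close to $f_\ell$, then Fej\'er--Riesz factorisation to obtain the invertible $\itheta_\ell$, with the variance formula falling out of the preserved lag-$0$ coefficient --- which is exactly how that theorem is proved.
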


\begin{theorem}\label{theo::approx-ar} If $f_\lambda(\cdot, \cdot)$ is a spectral density kernel of an isotropic stationary process, satisfying Condition \ref{cond::summability} $(i)$, then $\forall \epsilon > 0$ there exists a causal $\SPHAR(p)$ process
$$
T_L(x,t) = (\Phi_1 T_L(\cdot,t-1))(x)+ \dots + (\Phi_p T_L(\cdot,t-p))(x) + Z_L(x,t), \qquad Z_L \sim \text{SWN}(0, \{\sigma^2_\ell\}),
$$
with spectral density kernel $\tilde{f}_{\lambda}(\cdot, \cdot)$ such that 
$$
\|\tilde{f}_{\lambda} - f_\lambda\|_2 \le \epsilon \quad \text{for all } \lambda \in [-\pi,\pi].
$$
Under only Condition \ref{cond::summability} $(ii)$, 
$$
\| \tilde{\mathscr{F}}_\lambda - \mathscr{F}_\lambda \|_{\operatorname{TR}}  \le \epsilon \quad \text{for all } \lambda \in [-\pi,\pi].
$$
\end{theorem}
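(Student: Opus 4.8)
The plan is to exploit the simultaneous diagonalization of all the operators involved by the spherical harmonics, which reduces the statement to a countable family of \emph{scalar} spectral approximation problems, one per multipole $\ell$, and then to invoke the classical rational-approximation theory for real stationary sequences. Any band-limited causal $\SPHAR(p)$ process driven by $Z_L \sim \text{SWN}(0,\{\sigma^2_\ell\})$ has, by Proposition \ref{theo:solution} and the eigenvalue formula recorded after Condition \ref{Identifiability}, a spectral density kernel whose eigenvalues are $\tilde f_\ell(\lambda)=\frac{\sigma^2_\ell}{2\pi}\,|\phi_\ell(e^{i\lambda})|^{-2}$ for $\ell\le L$ and $\tilde f_\ell\equiv 0$ for $\ell>L$; since $\tilde{\mathscr F}_\lambda$ and $\mathscr F_\lambda$ share the eigenfunctions $\ylm$, the two target quantities factorize as
\begin{equation*}
\|\tilde f_\lambda - f_\lambda\|_2^2 = \sum_{\ell\ge 0}(2\ell+1)\,|\tilde f_\ell(\lambda)-f_\ell(\lambda)|^2, \qquad \|\tilde{\mathscr F}_\lambda - \mathscr F_\lambda\|_{\operatorname{TR}} = \sum_{\ell\ge 0}(2\ell+1)\,|\tilde f_\ell(\lambda)-f_\ell(\lambda)|.
\end{equation*}
It therefore suffices to choose, for finitely many $\ell$, scalar AR spectral densities $\tilde f_\ell$ close to $f_\ell$, while controlling both the truncation tail and the per-fibre errors uniformly in $\lambda$.

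First I would carry out the band-limiting step. Under Condition \ref{cond::summability} $(i)$ the map $\lambda\mapsto\|f_\lambda\|_2^2=\sum_{\ell\ge0}(2\ell+1)f_\ell(\lambda)^2$ is continuous on the compact interval $[-\pi,\pi]$, and its partial sums $\sum_{\ell=0}^L(2\ell+1)f_\ell(\lambda)^2$ are continuous and increase to it monotonically; by Dini's theorem the convergence is uniform, so I can fix $L$ with $\sup_\lambda\sum_{\ell>L}(2\ell+1)f_\ell(\lambda)^2<\epsilon^2/2$. The identical argument applied to the continuous map $\lambda\mapsto\|\mathscr F_\lambda\|_{\operatorname{TR}}=\sum_{\ell\ge0}(2\ell+1)f_\ell(\lambda)$ disposes of the trace-class tail under Condition \ref{cond::summability} $(ii)$. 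This leaves only the multipoles $\ell=0,\dots,L$ to handle.

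Next, for each such $\ell$ I would approximate $f_\ell$ by a causal scalar AR density. Each $f_\ell$ is a continuous, nonnegative, even function on $[-\pi,\pi]$, namely the spectral density of the real scalar stationary sequence $\{a_{\ell,m}(t)\}_t$, so the classical theory applies (see \cite[Chapter 4]{brockwelldavis}): after replacing $f_\ell$ by $f_\ell+\delta$ with $\delta$ small so that the regularized density is bounded away from zero, I approximate $1/(f_\ell+\delta)$ uniformly by a nonnegative trigonometric polynomial (Fej\'er means preserve nonnegativity), factor it via the Fej\'er--Riesz theorem choosing the factor $\phi_\ell$ whose zeros all lie strictly outside the closed unit disk, and set $\tilde f_\ell=\frac{\sigma^2_\ell}{2\pi}|\phi_\ell(e^{i\lambda})|^{-2}$ with the induced $\sigma^2_\ell>0$. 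Boundedness away from zero keeps the reciprocal error controllable, giving $\sup_\lambda|\tilde f_\ell(\lambda)-f_\ell(\lambda)|\le\eta_\ell$ for any prescribed $\eta_\ell$; choosing the $\eta_\ell$ so that $\sum_{\ell=0}^L(2\ell+1)\eta_\ell^2<\epsilon^2/2$ (resp. $\sum_{\ell=0}^L(2\ell+1)\eta_\ell<\epsilon/2$) completes the per-fibre control.

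Finally I would assemble the operators $\Phi_1,\dots,\Phi_p$ (with $p=\max_{\ell\le L}\deg\phi_\ell$, padding the lower-order fibres with zero coefficients) and the band-limited noise $Z_L$ with angular power spectrum $\sigma^2_\ell>0$, $\ell\le L$; each $\Phi_j$ is Hilbert--Schmidt, having only finitely many nonzero eigenvalues. On the active band the minimum-phase choice gives $\phi_\ell(z)\ne0$ for $|z|\le1$ with roots separated uniformly from the unit circle (automatic, since only finitely many fibres are nontrivial, cf. Remark \ref{rmk:infinf}), so Condition \ref{stationarity} holds and $\sigma^2_\ell>0$ supplies identifiability; hence Proposition \ref{theo:solution} yields a causal band-limited $\SPHAR(p)$ process whose spectral eigenvalues are exactly the $\tilde f_\ell$. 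Combining the tail and per-fibre estimates then gives $\|\tilde f_\lambda-f_\lambda\|_2\le\epsilon$ (resp. $\|\tilde{\mathscr F}_\lambda-\mathscr F_\lambda\|_{\operatorname{TR}}\le\epsilon$) for every $\lambda$. I expect the main obstacle to be the scalar step: securing a \emph{causal} (minimum-phase) AR approximation that is uniform in $\lambda$ even at frequencies where $f_\ell$ vanishes, which is precisely where the regularization $f_\ell+\delta$ and the Fej\'er--Riesz factorization do the work; the accompanying subtlety is that uniformity must be achieved simultaneously in $\lambda$ and across the truncated range of $\ell$, for which the Dini band-limiting is the enabling device.
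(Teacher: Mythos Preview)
Your proposal is correct and follows essentially the same strategy as the paper: diagonalize by spherical harmonics, truncate to finitely many multipoles with uniform-in-$\lambda$ tail control, apply the scalar causal AR approximation from \cite[Chapter 4]{brockwelldavis} to each $f_\ell$ with $\ell\le L$, and set $p=\max_{\ell\le L}p_\ell$. The only notable difference is in the tail-control step: you invoke Dini's theorem (continuous monotone partial sums converging to the continuous limit $\|f_\lambda\|_2^2$, resp.\ $\|\mathscr F_\lambda\|_{\operatorname{TR}}$), whereas the paper bounds the tail directly by $\big(\sum_{\ell>L}(2\ell+1)f_\ell(\lambda)^2\big)^{1/2}\le \sum_{t\in\mathbb Z}\big(\sum_{\ell>L}(2\ell+1)C_\ell(t)^2\big)^{1/2}$ and then uses dominated convergence with the summable majorant $\|r_t\|_2$; both routes are valid and of comparable length.
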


\begin{remark}
Even from an inferential point of view, these results suggest a way to estimate the spectral density operator $\mathscr{F}_\lambda$ of any isotropic stationary sphere-cross-time random field. Indeed, assuming to be able to observe the projections
of the fields on the spherical harmonics basis, it is possible to derive a sequence of rational estimators of the form
$$
\hat{f}_\ell(\lambda)  = \frac{\hat{\sigma}_\ell}{2\pi} \frac{|1 + \hat{\itheta}_{\ell ;1}e^{-i\lambda}-\cdots -\hat{\itheta}_{\ell ;q}e^{-i\lambda q}|^2}{|1-\hat{\phi}_{\ell ;1}e^{-i\lambda}-\cdots -\hat{\phi}_{\ell ;p}e^{-i\lambda p}|^2}, \qquad \ell \ge 0,
$$
for the eigenvalues characterizing $\mathscr{F}_\lambda$; see also \cite[Section 10.6]{brockwelldavis}.
\end{remark}


\subsection{The $L^2(\Omega)$ approximations}

Here we prove that any stationary spherical functional process can be approximated in the $L^2(\Omega)$ sense by both a SPHMA process and a SPHAR process of sufficiently large degree.
We also establish a functional Wold decomposition, which allows to represent the field as a sum of a linear process and a deterministic process, similarly to the finite-dimensional case, see \cite[Theorem 5.7.1]{brockwelldavis} and also \cite{bosq, BB:07}. This result is given in the auxiliary Lemma \ref{theo::wold} and it is instrumental for the proof of our last main theorem.

More formally, consider as usual a zero-mean isotropic stationary process $\{T(x,t), (x,t) \in \mathbb{S}^2\times \mathbb{Z}\}$. For each $(\ell,m)$, define the sequence of closed linear subspaces of $L^2(\Omega)$
\begin{align*}
&\mathcal{M}_{\ell, m; n} = \overline{\text{span}\{ a_{\ell,m}(n), \, -\infty < t \le n \}}, \qquad n \in \mathbb{Z}, \\
&\mathcal{M}_{\ell, m; -\infty} = \bigcap_{n=-\infty}^\infty \mathcal{M}_{\ell,m;n},
\end{align*}
and the $\ell$-th one-step mean squared error
$$
\sigma_\ell^2 = \mathbb{E}| a_{\ell, m} (n) - P_{\mathcal{M}_{\ell,m;n}} a_{\ell, m} (n+1) |^2 =  \inf_{f \in \mathcal{M}_{\ell,m; n}}  \mathbb{E} | a_{\ell, m}(n+1) - f |^2 ,
$$
where $P_{\mathcal{M}_{\ell,m;n}}$ is the projection operator on $\mathcal{M}_{\ell, m; n}$, see \cite[Chapter 5]{brockwelldavis}.
Note that
$$
\sigma^2 = \sum_{\ell=0}^\infty (2\ell+1) \sigma_\ell^2 < \infty;
$$
indeed, if we define $\mathcal{M}_n = \overline{\text{span}\{ T_t, \, -\infty < t \le n \}}\subset \LSh$, we can observe that
$$
\infty > \mathbb{E} \| T_{n+1} - P_{\mathcal{M}_n} T_{n+1} \|_2^2 \ge \sum_{\ell, m}  \mathbb{E} | a_{\ell, m}(n+1) - P_{\mathcal{M}_{\ell, m;n}} a_{\ell, m} (n+1)  |^2.
$$

The following conditions will be used to state our second main result, with the additional Wold-like decomposition.
\begin{condition}\label{cond::wold}
Consider the following assumptions:
\begin{enumerate}
\item $ \sigma_\ell^2 > 0$ for all $\ell \ge 0$;
\item $\mathcal{M}_{\ell,m;-\infty} = \{0\}$  for all $\ell \ge 0,\ m=-\ell,\dots, \ell$.
\end{enumerate}
\end{condition}

\begin{remark}
Note that Conditions \ref{cond::wold} $(i)$ and $(i)$ entail that each stationary subprocess $\{a_{\ell,m}(t), t \in \mathbb{Z}\}$ is, respectively, a \emph{non-deterministic} and \emph{purely non-deterministic} process, see \cite[Section 5.7]{brockwelldavis}.
\end{remark}

\begin{lemma}[Wold Decomposition]\label{theo::wold}
An isotropic stationary random field $\{T(x,t), \ (x,t) \in \mathbb{S}^2 \times \mathbb{Z}\}$ satisfying Conditions \ref{cond::wold} $(i)$ can be expressed as 
$$
T(x,t) = \sum_{j=0}^\infty \Psi_j Z(x, t - j) + V(x,t),
$$
in $L^2(\Omega)$ and $L^2(\mathbb{S}^2 \times \Omega)$, where 
\begin{enumerate}
\item $\Psi_j Z(x, t - j) := \sum_{\ell, m} \psi_{\ell;j} a_{\ell, m; Z } (t-j) Y_{\ell,m}(x)$;
\item $\psi_{\ell;0} = 1$ and $\sum_{j=0}^\infty \psi_{\ell;j}^2 < \infty$ for all $\ell \ge 0$; 
\item $a_{\ell,m;Z}(t) \in \mathcal{M}_{\ell,m; t}$ for all $\ell,m,t$;
\item $Z \sim \text{SWN}$ with power spectrum $\{\sigma^2_\ell \}$;
\item $V(x,t) := \sum_{\ell, m} V_{\ell,m} (t) Y_{\ell,m} (x)$ with $V_{\ell,m} (t) \in \mathcal{M}_{\ell,m;-\infty}$ for all $\ell,m,t$;
\item $\Ex[a_{\ell,m;Z}(t)\overline{V_{\ell',m'}}(s)]=0$ for all $\ell,\ell',m,m',t,s.$
\end{enumerate}
\end{lemma}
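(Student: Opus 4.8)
The plan is to reduce the functional Wold decomposition to the classical scalar Wold theorem applied separately to each harmonic coefficient process, and then to reassemble the pieces, using isotropy to guarantee that the resulting innovation field is a spherical white noise. First I would invoke the harmonic expansion $T(x,t)=\sum_{\ell,m}a_{\ell,m}(t)Y_{\ell,m}(x)$ together with the uncorrelatedness relation $\Ex[a_{\ell,m}(t)\overline{a_{\ell',m'}(s)}]=C_\ell(t-s)\delta_\ell^{\ell'}\delta_m^{m'}$ recorded above. This shows that the closed spans of the individual coefficient sequences are mutually orthogonal subspaces of $L^2(\Omega)$, and that each $\{a_{\ell,m}(t),\,t\in\mathbb{Z}\}$ is a scalar, zero-mean, stationary process whose autocovariance $C_\ell(\cdot)$ depends on $\ell$ but not on $m$.

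For each fixed $(\ell,m)$, Condition \ref{cond::wold} $(i)$ (that is, $\sigma_\ell^2>0$) makes the process non-deterministic, so the classical Wold theorem \cite[Theorem 5.7.1]{brockwelldavis} applies and yields $a_{\ell,m}(t)=\sum_{j=0}^\infty\psi_{\ell;j}a_{\ell,m;Z}(t-j)+V_{\ell,m}(t)$ in $L^2(\Omega)$, where $a_{\ell,m;Z}(t):=a_{\ell,m}(t)-P_{\mathcal{M}_{\ell,m;t-1}}a_{\ell,m}(t)$ is the one-step innovation, $\psi_{\ell;0}=1$, $\sum_j\psi_{\ell;j}^2<\infty$, and $V_{\ell,m}(t)\in\mathcal{M}_{\ell,m;-\infty}$ is the deterministic remainder, orthogonal to all innovations of the same index. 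Since the Wold coefficients and the innovation variance $\sigma_\ell^2=\Ex|a_{\ell,m;Z}(t)|^2$ are determined solely by $C_\ell(\cdot)$, both are independent of $m$; this is the structural fact that will make the reassembled field isotropic. I would also record that the reality of $T$ forces the Hermitian symmetry $a_{\ell,-m;Z}(t)=(-1)^m\overline{a_{\ell,m;Z}(t)}$, so that the fields built below are real-valued. Note that Condition \ref{cond::wold} $(ii)$ is not assumed, so $V$ may be genuinely nonzero.

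Next I would set $Z(x,t):=\sum_{\ell,m}a_{\ell,m;Z}(t)Y_{\ell,m}(x)$ and $V(x,t):=\sum_{\ell,m}V_{\ell,m}(t)Y_{\ell,m}(x)$ and verify (i)--(vi). Temporal whiteness of each innovation sequence, combined with the cross-$(\ell,m)$ orthogonality of the spans, gives $\Ex[a_{\ell,m;Z}(t)\overline{a_{\ell',m';Z}(s)}]=\sigma_\ell^2\delta_\ell^{\ell'}\delta_m^{m'}\delta_t^s$; together with the summability $\sum_\ell(2\ell+1)\sigma_\ell^2<\infty$ already established before the statement, this is exactly Definition \ref{SphericalWhiteNoise} for $Z\sim\text{SWN}(0,\{\sigma_\ell^2\})$, where isotropy of each $Z(\cdot,t)$ is the converse direction of the spectral characterisation of isotropic fields \cite[Theorem 5.13]{MP:11}. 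Property (vi) and the mutual orthogonality needed in (v) again follow from the orthogonality of the subspaces $\mathcal{M}_{\ell,m;\cdot}$ across distinct indices, while (ii)--(iii) are read off directly from the scalar Wold output.

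Finally, I would assemble the convergence of the two series. For each $(\ell,m)$ the scalar identity holds in $L^2(\Omega)$; summing over $m$ and $\ell$ and using Parseval together with the orthogonality of all the components reduces the tail of the $L^2(\s\times\Omega)$ error to $\sum_{\ell>L}(2\ell+1)C_\ell(0)$, which vanishes because $\Ex|T(x,t)|^2<\infty$ (and the analogous pointwise bound with $P_\ell(1)=1$ gives the $L^2(\Omega)$ statement for each $(x,t)$). The main obstacle is not any single estimate but the coherent reassembly in the third step: one must check that the separately constructed innovations glue into a genuinely isotropic spherical white noise, which relies essentially on the $m$-independence of $\sigma_\ell^2$ and $\psi_{\ell;j}$ and on the cross-index orthogonality of the prediction subspaces. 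Once these are in place, the remaining verifications are bookkeeping with the already-established summability conditions.
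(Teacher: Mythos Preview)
Your proposal is correct and follows essentially the same route as the paper: apply the scalar Wold theorem \cite[Theorem~5.7.1]{brockwelldavis} to each coefficient process $\{a_{\ell,m}(t)\}$, exploit the cross-$(\ell,m)$ orthogonality of the spans to obtain the SWN and orthogonality properties, and then reassemble via the summability $\sum_\ell(2\ell+1)C_\ell(0)<\infty$ together with dominated convergence. Your explicit remarks on the $m$-independence of $\psi_{\ell;j},\sigma_\ell^2$ and on Hermitian symmetry are points the paper leaves implicit, but the overall strategy is the same.
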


\begin{remark}
The $h$-step prediction error $\sigma^2(h) := \sum_{\ell,m} \mathbb{E} | a_{\ell, m}(t+h) - P_{\mathcal{M}_{\ell, m;t}} a_{\ell, m} (t+h)  |^2$ is given by
$$
\sigma^2(h) = \sum_\ell (2\ell+1) \sigma^2_\ell \sum_{j=0}^{h-1} \psi_{\ell;j}^2.
$$
For a purely non-deterministic process it is clear that the $h$-step prediction mean squared error converges as $h \to \infty$ to the total variance of the process.
\end{remark}

\begin{remark}
For an isotropic stationary random field $\{T(x,t), \ (x,t) \in \mathbb{S}^2 \times \mathbb{Z}\}$ satisfying the hypotheses of Lemma \ref{theo::wold} and Condition \ref{cond::summability} $(i)$ or $(ii)$,
the eigenvalues of the spectral operator $\mathscr{F}_\lambda$ can be expressed as
$$
f_\ell(\lambda) = |\psi_\ell (e^{-i\lambda})|^2 \sigma_\ell^2/2\pi, \qquad \text{where } \psi_\ell (e^{-i\lambda}) = \sum_{j=0}^\infty \psi_{\ell;j} e^{-ij\lambda}.
$$
\end{remark}

We are now in the position to present our last main theorem. The statement makes precise the way
in which the $L^2(\Omega)$ approximations hold.

\begin{theorem}\label{theo::approx-l2}
An isotropic stationary random field $\{T(x,t), \ (x,t) \in \mathbb{S}^2 \times \mathbb{Z}\}$ satisfying Conditions \ref{cond::wold} $(i)$ and $(ii)$ is such that, for all $\epsilon > 0$, there exists integers $L$ and $q$ such that
\begin{equation}\label{approx1}
\mathbb{E}\left | T(x,t) -  Z(x, t) - \sum_{j=1}^q \Psi_{j;L} Z(x, t - j) \right |^2 \le \epsilon,
\end{equation}
where $\Psi_{j;L} Z(x, t - j) := \sum_{\ell=0}^L \sum_{m=-\ell}^\ell \psi_{\ell;j} a_{\ell, m; Z } (t-j) Y_{\ell,m}(x)$.

Moreover, for all $\epsilon > 0$, there exist integers $L$ and $p$ such that
\begin{equation}\label{approx2}
\mathbb{E}\left | T(x,t) -  \sum_{j=1}^p \Phi_{j;L} T(x, t - j) - Z(x,t) \right |^2 \le \epsilon,
\end{equation}
where $ \Phi_{j;L} T(x, t - j)  := \sum_{\ell=0}^L \sum_{m=-\ell}^\ell  \phi_{\ell;j} a_{\ell, m}(t) \ylm(x)$ for some coefficients $\{\phi_{\ell;j}\}$ such that $\sum_{j=1}^\infty \phi^2_{\ell;j} < \infty$. Both results also hold in the $L^2(\mathbb{S}^2 \times \Omega)$ sense.
\end{theorem}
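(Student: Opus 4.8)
The plan is to read both approximations off the Wold decomposition of Lemma \ref{theo::wold}, which under Condition \ref{cond::wold} $(ii)$ has trivial deterministic part $V\equiv 0$, so that, using $\psi_{\ell;0}=1$,
$$
T(x,t)=\sum_{j=0}^\infty \Psi_j Z(x,t-j)=Z(x,t)+\sum_{j=1}^\infty\sum_{\ell,m}\psi_{\ell;j}\,a_{\ell,m;Z}(t-j)Y_{\ell,m}(x)
$$
in $L^2(\Omega)$. The whole argument rests on two structural facts. First, by isotropy together with the spherical white-noise property, the coefficients are diagonal, $\Ex[a_{\ell,m;Z}(t)\overline{a_{\ell',m';Z}(s)}]=\sigma_\ell^2\,\delta_\ell^{\ell'}\delta_m^{m'}\delta_t^s$, and, expanding $a_{\ell,m}(t)=\sum_{j\ge0}\psi_{\ell;j}a_{\ell,m;Z}(t-j)$, the cross-covariance $\Ex[a_{\ell,m}(t)\overline{a_{\ell',m';Z}(s)}]$ is likewise proportional to $\delta_\ell^{\ell'}\delta_m^{m'}$. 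Second, the addition theorem $\sum_{m=-\ell}^\ell|Y_{\ell,m}(x)|^2=(2\ell+1)/4\pi$. Consequently, for any linear functional of the coefficients the pointwise mean-square error diagonalizes into a single sum over $(\ell,m)$; being independent of $x$, its integral over $\s$ is $4\pi$ times the pointwise value, which is why the $L^2(\s\times\Omega)$ statements follow at once from the $L^2(\Omega)$ ones.

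For the moving-average bound \eqref{approx1} I would subtract the truncated sum and expand in the $Y_{\ell,m}$ basis. The residual coefficient at $(\ell,m)$ equals $\sum_{j>q}\psi_{\ell;j}a_{\ell,m;Z}(t-j)$ when $\ell\le L$ and $\sum_{j\ge1}\psi_{\ell;j}a_{\ell,m;Z}(t-j)$ when $\ell>L$; the diagonal covariance and the addition theorem then give
$$
\Ex\Big|T(x,t)-Z(x,t)-\sum_{j=1}^q\Psi_{j;L}Z(x,t-j)\Big|^2=\sum_{\ell\le L}\frac{2\ell+1}{4\pi}\sigma_\ell^2\sum_{j>q}\psi_{\ell;j}^2+\sum_{\ell>L}\frac{2\ell+1}{4\pi}\sigma_\ell^2\sum_{j\ge1}\psi_{\ell;j}^2.
$$
Since $C_\ell(0)=\sigma_\ell^2\sum_{j\ge0}\psi_{\ell;j}^2$ and $\sum_\ell\frac{2\ell+1}{4\pi}C_\ell(0)=\Ex|T(x,t)|^2<\infty$, the second sum is the tail of a convergent series, made $<\epsilon/2$ by taking $L$ large; with $L$ fixed, the first sum is a finite sum of tails $\sum_{j>q}\psi_{\ell;j}^2\to0$, made $<\epsilon/2$ by taking $q$ large.

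The autoregressive bound \eqref{approx2} follows the same template, but the truncated Wold sum is replaced by finite-order linear prediction of each scalar subprocess. For $\ell\le L$ I would let $\phi_{\ell;1},\dots,\phi_{\ell;p}$ be the coefficients of the best linear predictor of $a_{\ell,m}(t)$ on $\operatorname{span}\{a_{\ell,m}(t-1),\dots,a_{\ell,m}(t-p)\}$; by isotropy these depend only on $\ell$, and extended by zero they are trivially square-summable. Since $\mathcal{M}_{\ell,m;t-1}=\overline{\bigcup_p \operatorname{span}\{a_{\ell,m}(t-1),\dots,a_{\ell,m}(t-p)\}}$, the finite predictor converges in $L^2$ to $P_{\mathcal{M}_{\ell,m;t-1}}a_{\ell,m}(t)=a_{\ell,m}(t)-a_{\ell,m;Z}(t)$, so the residual coefficient $a_{\ell,m}(t)-\sum_{j\le p}\phi_{\ell;j}a_{\ell,m}(t-j)-a_{\ell,m;Z}(t)$ has mean square tending to $0$ as $p\to\infty$; for $\ell>L$ the residual is $a_{\ell,m}(t)-a_{\ell,m;Z}(t)$ with mean square $C_\ell(0)-\sigma_\ell^2\le C_\ell(0)$. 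Diagonalizing as before yields $\sum_{\ell>L}\frac{2\ell+1}{4\pi}(C_\ell(0)-\sigma_\ell^2)+\sum_{\ell\le L}\frac{2\ell+1}{4\pi}\,\Ex|\text{residual}_\ell^{(p)}|^2$, and I would again fix $L$ to control the tail and then $p$ to control the finite sum.

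The main obstacle is the autoregressive step: unlike the moving-average case there is no ready-made AR$(\infty)$ representation to truncate, because invertibility of the Wold factor is not assumed, so the argument must route through finite-memory prediction and the Hilbert-space convergence of projections onto the increasing subspaces $\operatorname{span}\{a_{\ell,m}(t-1),\dots,a_{\ell,m}(t-p)\}$. Care is also needed to confirm that the prediction coefficients are $m$-independent and that the cross-covariances $\Ex[a_{\ell,m}(t)\overline{a_{\ell',m';Z}(s)}]$ vanish off the diagonal, since this is precisely what collapses the error into a single sum over $(\ell,m)$.
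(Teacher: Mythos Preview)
Your moving-average half is exactly the paper's argument: invoke the Wold decomposition with $V\equiv 0$, expand the residual in the $Y_{\ell,m}$ basis, and control the tail in $\ell$ and then in $j$. The paper's proof of this part is a two-line appeal to Lemma \ref{theo::wold}, but the content is identical.

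For the autoregressive half the routes diverge. The paper simply asserts that, because $a_{\ell,m;Z}(t)\in\mathcal{M}_{\ell,m;t}$, ``the representation can be inverted'' as $a_{\ell,m}(t)=\sum_{j\ge1}\phi_{\ell;j}a_{\ell,m}(t-j)+a_{\ell,m;Z}(t)$ with $\sum_j\phi_{\ell;j}^2<\infty$, and then truncates that fixed AR$(\infty)$ series in $j$ and $\ell$. You instead take $\phi_{\ell;1},\dots,\phi_{\ell;p}$ to be the finite-memory best linear predictor coefficients (depending on $p$, then extended by zero) and use the Hilbert-space fact that projections onto $\operatorname{span}\{a_{\ell,m}(t-1),\dots,a_{\ell,m}(t-p)\}$ increase to $P_{\mathcal{M}_{\ell,m;t-1}}$. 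Your route is more elementary and more robust: the paper's AR$(\infty)$ inversion is stated without proof, and membership of the innovation in the closed span does not by itself yield a square-summable series representation. What the paper's approach buys, when it can be justified, is a single family $\{\phi_{\ell;j}\}$ independent of the truncation level $p$; your coefficients vary with $p$, but they still satisfy the square-summability clause of the statement trivially, so the theorem is proved either way.
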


\begin{remark}
Clearly, it is possible to obtain a $\operatorname{SPHARMA}(p,q)$ approximation, by first applying \eqref{approx2} to $\{T(x,t), \ (x,t) \in \mathbb{S}^2 \times \mathbb{Z}\}$ and then \eqref{approx1} to the residual $\{Z(x,t), \ (x,t) \in \mathbb{S}^2 \times \mathbb{Z}\}$.
\end{remark}

\section{Proofs}\label{sec::proofs}

\begin{proof}[Proof of Proposition \ref{theo:functional_cramer}]
The proof follows the same lines of \cite{Panaretos}. Let $\LSh$ be the Hilbert space of $\LSc$-valued random elements with finite second moment and $\mathbb{M}_0$ be the complex linear space spanned by all finite linear
combinations of the $T_t$'s,
\begin{equation*}
\mathbb{M}_0 := \{ \sum_{j=1}^n b_j T_j: \ n \in \mathbb{N}, \ b_j \in \mathbb{C}, \ t_j \in \mathbb{Z}   \} \subset \LSh.
\end{equation*}
Let $e_t: \nu \mapsto e^{it\nu}$, which belongs to the (complex) Hilbert space $L^2([-\pi, \pi], \|\mathscr{F}_{\nu} \|_{\operatorname{TR}} d \nu )$ endowed with the standard inner product
\begin{equation*}
\int_{-\pi}^\pi f(\nu) \overline{g(\nu)} \|\mathscr{F}_{\nu} \|_{\operatorname{TR}}d\nu, \qquad f,g \in L^2([-\pi, \pi], \|\mathscr{F}_{\nu} \|_{\operatorname{TR}} d\nu ),
\end{equation*}
$ \|\mathscr{F}_{\nu} \|_{\operatorname{TR}}$ being the nuclear norm of the spectral density operator.
Now, define the linear operator $E$ by linear extension of the mapping $T_t  \mapsto  e_t.$ $E$ is well defined and a linear isometry; in particular, the inversion formula \eqref{inversion-formula} gives
\begin{equation*}
\langle T_t, T_s \rangle_{\LSh} = \Ex \left[ \int_{\s} T(x,t) T(x, s) d x \right] = \int_{-\pi}^\pi e^{i(t-s)\nu} \|\mathscr{F}_{\nu} \|_{\operatorname{TR}} d\nu.
\end{equation*}
Then, we extend its domain to $\mathbb{M}$, the closure of $\mathbb{M}_0$ in $\LSh$ (see \cite{Panaretos} for further details); the extension has a well-defined inverse $E^{-1}: L^2([-\pi,\pi], \|\mathscr{F}_{\nu} \|_{\operatorname{TR}} d\nu) \to \mathbb{M}.$ For any $\omega \in (-\pi, \pi]$, we define $Z_{\omega} = E^{-1}(\mathbbm{1}_{[-\pi, \omega) }) \in \mathbb{M}$ and $Z_{-\pi } \equiv 0$. By the isometry property,
\begin{equation}\label{isometry_Z}
\langle Z_{\omega}, Z_{\beta} \rangle_{\LSh} =\langle E^{-1}\mathbbm{1}_{[-\pi, \omega)}, E^{-1}\mathbbm{1}_{[-\pi, \beta)} \rangle_{\LSh} = \int_{-\pi}^{\min\{\omega, \beta\}}  \|\mathscr{F}_{\nu} \|_{\operatorname{TR}} d\nu .
\end{equation}
Hence, $\omega \mapsto Z_{\omega}$ is an orthogonal increment process. 

The proof follows with definition of an operator $\zeta$ as extension of the mapping
\begin{equation*}
\sum_{j=1}^n g_j \mathbbm{1}_{[\omega_j, \omega_{j+1})} \mapsto \sum_{j=1}^n g_j (Z_{\omega_{j+1}} - Z_{\omega_j}).
\end{equation*}
The operator $\zeta$ is, by \eqref{isometry_Z}, an isomorphism with domain $ L^2([-\pi,\pi], \|\mathscr{F}_{\nu} \|_{\operatorname{TR}} d\nu)$, and in addition $\zeta = E^{-1}$. This in turn implies $T_t = E^{-1}(e_t)= \zeta(e_t)$. If $g$ is cadlag with a finite number of jumps, then $\zeta(g)$ is in fact the Riemann-Stieltjes integral (in the mean square sense) with respect to the orthogonal increment process $Z_\omega$:
\begin{equation*}
\zeta(g) = \int_{-\pi}^\pi g(\lambda) dZ_\lambda.
\end{equation*}
In conclusion, $T_t=\int_{-\pi }^{\pi }e^{it\lambda }dZ_{\lambda },$ as claimed.
\end{proof}

\begin{proof}[Proof of Proposition \ref{theo:ckl}]
Define $a_{\ell, m}(t):=\langle T_t, \ylm \rangle_2$. For every fixed $(\ell,m)$, $\{a_{\ell, m}(t), \ t \in \mathbb{Z}\}$ forms a zero-mean complex-valued stationary sequence, i.e.,
\begin{align*}
&\Ex[a_{\ell, m}(t)]=0;\\
&\Ex|a_{\ell, m}(t)|^2 < \infty;\\
&\Ex[a_{\ell ,m}(t) \overline{a_{\ell, m}}(s)]= C_{\ell}(t-s).
\end{align*}

Indeed, by Fubini's Theorem we have
\begin{equation*}
\Ex[a_{\ell, m}(t)] = \Ex \langle T_t, \ylm \rangle_2 = \langle 0, \ylm \rangle_2 = 0.
\end{equation*}
Moreover, 
\begin{equation*}
\Ex|a_{\ell, m}(t)|^2 \le \Ex\| T_t \|_2^2 < \infty,
\end{equation*}
and, again by Fubini's Theorem,
\begin{equation}\label{alm-stationarity}
\Ex[a_{\ell, m}(t) \overline{a_{\ell', m'}}(s)] =  \Ex\left [ \langle T_t, \ylm \rangle_2 \overline{\langle T_s, \ylm \rangle_2} \right] = \langle \mathscr{R}_{t-s} \ylm, Y_{\ell',m'} \rangle_2 \, \delta _{\ell}^{\ell'}  \delta_{m}^{m'}.
\end{equation}
Therefore, as a result of the Spectral Theorem for stationary time series (see for instance \cite{brockwelldavis}), the following representation holds
\begin{equation*}
a_{\ell, m}(t) = \int_{-\pi}^\pi e^{i\lambda t} d \alpha_{\ell, m}(\lambda), \qquad \textnormal{a.s.},
\end{equation*}
where $\{\alpha_{\ell, m}(\lambda), \ -\pi \le \lambda \le \pi\}$ is an orthogonal increment process, and the stochastic integral involved can be understood as a Riemann-Stieltjes limit, in the sense that
\begin{equation*}
\Ex \left | a_{\ell, m}(t) - \sum_{j=1}^J e^{i\lambda_jt} \left [\alpha_{\ell ,m}(\lambda_{j+1}) - \alpha_{\ell, m}(\lambda_j) \right ] \right |^2 \to 0, \qquad J \to \infty,
\end{equation*}
where $-\pi = \lambda_1 < \cdots < \lambda_{J+1}= \pi$ and $\max_{j=1,\dots,J} |\lambda_{j+1} - \lambda_j| \to 0$ as $J\to \infty$

Moreover, recall from the Spectral Theorem for isotropic random fields on $\s$ that 
\begin{equation*}
\Ex \left  | T(x,t) - \sum_{\ell=0}^L \sum_{m=-\ell}^\ell \int_{-\pi}^\pi e^{i\lambda t} d \alpha_{\ell ,m}(\lambda) \ylm \right |^2   \to 0, \qquad L\to \infty.
\end{equation*}
Now we prove that $\alpha_{\ell ,m}(\lambda) \overset{\textnormal{a.s.}}{=} \langle Z_\lambda, \ylm \rangle_2$, $Z_\lambda$ as defined in Proposition \ref{theo:functional_cramer}

For a fixed $\lambda$, $\alpha_{\ell, m}(\lambda) \in \overline{\textnormal{span}\{a_{\ell, m} (t), \ t \in \mathbb{Z}\}}=\overline{\textnormal{span}\{\langle T_t, \ylm \rangle_2, \ t \in \mathbb{Z}\}} \subset L^2(\Omega)$. Indeed, from \cite{brockwelldavis}, we know that there exist a sequence $\{\alpha_j\}_{j \in \mathbb{Z}} \subset \mathbb{C}$ such that 
\begin{equation*}
\Ex \left | \alpha_{\ell ,m} (\lambda)- \sum_{|j| \le k} \alpha_j \langle T_{t_j}, \ylm \rangle_2 \right |^2 \to 0, \qquad k \to \infty.
\end{equation*}
The sequence is given by
\begin{equation}\label{fourier_I}
\alpha_j = \frac{1}{2\pi} \int_{-\pi}^\pi \mathbbm{1}_{[-\pi, \lambda)}(\nu) e^{-ij\nu} d\nu, \qquad j \in \mathbb{Z}.
\end{equation}
Now,
\begin{align*}
\Ex \left | \langle Z_\lambda, \ylm \rangle_2 - \sum_{|j|\le k} \alpha_j \langle T_{t_j}, \ylm \rangle_2 \right |^2&=  \Ex \left | \left \langle Z_\lambda - \sum_{|j|\le k} \alpha_j T_{t_j}, \ylm \right \rangle_2 \right |^2 \nonumber \\
&\le  \Ex \left \| Z_\lambda - \sum_{|j|\le k} \alpha_j T_{t_j} \right \|_{\LSc}^2 ,
\end{align*}
by Cauchy-Schwartz inequality e orthonormality of the $\ylm$'s.

We just need to prove that 
\begin{equation*}
 \Ex \left \| Z_\lambda - \sum_{|j|\le k} \alpha_j T_{t_j} \right \|_{\LSc}^2 \to 0, \qquad k \to \infty.
\end{equation*}
Recall that $\{	\alpha_j,  \ j \in \mathbb{Z}\}$ as defined in \eqref{fourier_I} represent the Fourier coefficients of the indicator function $\mathbbm{1}_{[-\pi, \lambda)}(\cdot)$. Then, its $k$-th order Fourier series approximation is given by
\begin{equation*}
h_k(\cdot) = \sum_{|j| \le k} a_j e^{ik \cdot},
\end{equation*}
and $ \sum_{|j|\le k} \alpha_j T_{t_j} = E^{-1}(h_k)$, where $E$ is the isomorphism of Proposition \ref{cramer}.
Since $\|\mathscr{F}_\lambda \|_{\operatorname{TR}} \le const$ uniformly over $\lambda$ by assumption, it holds that 
\begin{equation*}
\int_{-\pi}^\pi |h_k(\nu) -  \mathbbm{1}_{[-\pi, \lambda)}(\nu)|^2  \| \mathscr{F}_\nu \|_{\operatorname{TR}} d\nu \to 0, \qquad k \to \infty.
\end{equation*}
By continuity of $E$, we conclude that 
\begin{equation*}
E^{-1}(h_k) \to E^{-1}( \mathbbm{1}_{[-\pi, \lambda)} ) = Z_\lambda, \qquad k \to \infty,
\end{equation*}
in the $L^2$-sense.

Moreover, see \cite{Panaretos},
$$
\mathbb{E}[Z_\omega(x) \overline{Z_{\beta}(y)}] = \int_{-\pi}^{\min (\omega, \beta)} f_\alpha(x, y) d \alpha, \qquad \text{a.e.}.
$$
Under isotropy this is equal to
$$
\mathbb{E}[Z_\omega(x) \overline{Z_{\beta}(y)}]   = \sum_{\ell=0}^\infty \frac{2\ell+1}{4\pi}   \int_{-\pi}^{\min (\omega, \beta)} f_\ell(\alpha) d \alpha \, P_\ell(\langle x,y \rangle),  \qquad \text{a.e.},
$$
which means that
$$
\mathbb{E}[\alpha_{\ell,m}(\omega) \alpha_{\ell',m'}(\beta)]=  \int_{-\pi}^{\min (\omega, \beta)}  f_\ell(\alpha) d \alpha \,\delta_\ell^{\ell'} \delta_{m}^{m'}.
$$
Note that 
$$
0 \le  \int_{-\pi}^{\min (\omega, \beta)} f_\ell(\alpha) d \alpha \le \int_{-\pi}^{\pi} f_\ell(\alpha) d \alpha = C_\ell(0).
$$

\end{proof}

The next proof is composed of two steps. First we show that the $T(x,t)=\lim_{k\rightarrow \infty }T_{k}(x,t)$ is a solution of the $\operatorname{SPHARMA}(p,q)$ equation \eqref{defar}; and then we prove that any isotropic stationary solution of \eqref{defar} takes the form \eqref{eq::stationary-isotropic-solution}.

\begin{proof}[Proof of Proposition \ref{theo:solution}]
First note that, under Condition \ref{stationarity}, for any $\ell \ge 0$, $\itheta_\ell(z)/\phi_\ell(z)$ has a power series expansion, that is,
\begin{equation*}
\itheta_\ell(z) / \phi_\ell (z) = \sum_{j=0}^\infty \psi_{\ell;j} z^j =  \psi_\ell(z), \qquad |z| \le 1,
\end{equation*}
and
\begin{equation*}
\sum_{j=0}^\infty |\psi_{\ell;j}| < \infty,
\end{equation*}
see \cite[Proof of Theorem 3.1.1, page 85]{brockwelldavis}.

Now, let us show first that the sequence $\{T_{k}\}$ is Cauchy. Indeed we have, for $%
k^{\prime }>k, L_{k'} >L_k,$%
\begin{align*}
T_{k^{\prime }}(x,t)-T_{k}(x,t) &=\sum_{\ell =0}^{L_k}\sum_{m=-\ell
}^{\ell }\sum_{j=k}^{k^{\prime }}\psi_{\ell;j}a_{\ell, m;Z}(t-j)\ylm(x) \\
&+\sum_{\ell =L_k+1}^{L_{k^{\prime }}}\sum_{m=-\ell }^{\ell
}\sum_{j=0}^{k^{\prime }}\psi_{\ell;j}a_{\ell, m;Z}(t-j)\ylm(x)
\end{align*}%
and, therefore,%
\begin{align}
&\mathbb{E}\left\vert T_{k^{\prime }}(x,t)-T_{k}(x,t)\right\vert ^{2} \notag \\
=&\sum_{\ell =0}^{L_k}\frac{%
2\ell +1}{4\pi }C_{\ell ;Z}\sum_{j=k}^{k^{\prime }}|\psi_{\ell;j}|^{2} +\sum_{\ell =L_k+1}^{L_{k^{\prime }}}\frac{%
2\ell +1}{4\pi }C_{\ell ;Z} \sum_{j=0}^{k^{\prime }}|\psi _{j;\ell
}|^{2} \notag \\
\leq&\sum_{\ell =0}^{\infty }\frac{%
2\ell +1}{4\pi }C_{\ell ;Z}\sum_{j=k}^{\infty }|\psi_{\ell;j}|^{2} + \sum_{\ell =L_k+1}^{\infty}\frac{%
2\ell +1}{4\pi }C_{\ell ;Z}\sum_{j=0}^{\infty}|\psi _{j;\ell
}|^{2} . \label{ineq}
\end{align}
For $\ell \ge 0,$ consider the stationary process
\begin{equation*}
X_{\ell }(t)=\sum_{j=0}^{\infty }\psi_{\ell;j}\varepsilon (t-j), \qquad t \in \mathbb{Z};
\end{equation*}%
here we take $\left\lbrace  \varepsilon (t), \ t \in \mathbb{Z}\right\rbrace  $ to be a white noise sequence
with variance identically equal to one. The spectral density of $\{X_{\ell }(t), \ t\in \mathbb{Z}\}$ is given by (see \cite{brockwelldavis})%
$$
f_{\ell }(\lambda ) := \frac{1}{2\pi }\left\vert \sum_{j=0}^{\infty }\psi
_{j;\ell }\exp (i\lambda j)\right\vert ^{2} = \frac{1}{2\pi }|\psi_\ell
(e^{i\lambda })|^{2} = \frac{1}{2\pi }\left \vert \frac{\itheta_\ell (e^{i\lambda})}{\phi
_{\ell }(e^{i\lambda })} \right\vert ^{2}, 
$$
Now, recall the identity%
\begin{equation*}
\Var[X_{\ell }(t)]=\sum_{j=0}^{\infty }|\psi_{\ell;j}|^{2}=\int_{-\pi }^{\pi
}f_{\ell }(\lambda )d\lambda.
\end{equation*}%
whence%
\begin{equation*}
\sum_{j=0}^{\infty }|\psi_{\ell;j}|^{2}=\frac{1}{2\pi }\int_{-\pi }^{\pi
}|\psi_\ell (e^{i\lambda })|^{2\text{ }}d\lambda =\frac{1}{%
2\pi }\int_{-\pi }^{\pi }\left \vert \frac{\itheta_\ell (e^{i\lambda})}{\phi
_{\ell }(e^{i\lambda })} \right\vert ^{2}%
d\lambda .
\end{equation*}%
Moreover, under Condition \ref{stationarity}, for the non-degenerate polynomials it holds that
\begin{equation*}
|\phi _{\ell }(e^{i\lambda })| = \prod_{u=1}^{r_\ell} |1-\xi^{-1}_{\ell; u} e^{i\lambda}|^{s_{\ell;u}} \ge \prod_{u=1}^{r_\ell}( 1-|\xi^{-1}_{\ell; u}| )^{s_{\ell;u}} \ge ( 1-\xi^{-1}_{\ast} )^p >0\ ,
\end{equation*}
see Remark \ref{rmk:infinf}; hence, as a consequence,
\begin{equation*}
\sum_{j=0}^{\infty }|\psi_{\ell;j}|^{2} =\frac{1}{2\pi }\int_{-\pi }^{\pi }%
\left | \frac{\itheta_\ell(e^{i\lambda})}{\phi _{\ell }(e^{i\lambda })} \right |^2d\lambda \leq const \left (\frac{\xi_\ast }{%
\xi_\ast -1}\right )^{2p} 
<const,
\end{equation*}%
uniformly over $\ell$, and
\begin{equation*}
\sum_{\ell =0}^{\infty }\sum_{j=0}^{\infty }|\psi_{\ell;j}|^{2}\frac{2\ell +1%
}{4\pi }C_{\ell ;Z}<\infty.
\end{equation*}%
Then, by the Dominated Convergence Theorem, we have 
\begin{equation*}
\lim_{k\rightarrow \infty }\sum_{\ell =0}^{\infty }\sum_{j=k}^{\infty }|\psi
_{j;\ell }|^{2}\frac{2\ell +1}{4\pi }C_{\ell ;Z}=\sum_{\ell =0}^{\infty
}\left\lbrace  \lim_{k\rightarrow \infty }\sum_{j=k}^{\infty }|\psi _{j;\ell
}|^{2}\right\rbrace  \frac{2\ell +1}{4\pi }C_{\ell ;Z}=0,
\end{equation*}%
and \eqref{ineq} $\rightarrow 0$ as $k\rightarrow \infty $, so that $\left\lbrace  T_{k}\right\rbrace  $ is indeed a Cauchy
sequence. The proof that it satisfies \eqref{defar} is standard; without loss of generality here we consider the $\SPHAR(p)$ case. Hence, we have that%
\begin{align*}
&\Ex \left | T(x,t)-\sum_{j=1}^p (\Phi _{j} T(\cdot,t-j))(x) - Z(x,t)\right |^2 \\
=&\lim_{k\rightarrow \infty }\Ex \left |  T_{k}(x,t)-\sum_{j=1}^p (\Phi _{j} T_k(\cdot,t-j))(x)-Z(x,t)\right |^2 \\
=&\lim_{k\rightarrow \infty }\Ex \left |  \sum_{j=1}^p \sum_{\ell =1}^{L_k}\sum_{m=-\ell }^{\ell }\sum_{h=k-j+1}^{k}\phi
_{\ell;j}\psi _{h;\ell }a_{\ell, m;Z}(t-j-h)\ylm (x)\right |^2\\
\leq & \sum_{j=1}^p \lim_{k\rightarrow \infty } \Ex \left |  \sum_{\ell =1}^{L_k}\sum_{m=-\ell }^{\ell }\sum_{h=k-j+1}^{k}\phi
_{\ell;j}\psi _{h;\ell }a_{\ell, m;Z}(t-j-h)\ylm (x)\right |^2,
\end{align*}%
which again is easily shown to be zero by $\lim_{k\rightarrow \infty }\sum_{h=k-j+1}^\infty |\psi_{h;\ell}|^2=0$ and Dominated Convergence Theorem. The argument involving the $L^2(\s \times \Omega)$ limit is analogous.

\medskip
To complete the proof, we need to show that if $\{U(x,t), (x,t) \in \s \times \mathbb{Z}\}$ is an isotropic stationary solution of \eqref{defar}, then we must have $$U(x,t) = \sum_{\ell=0}^\infty \sum_{m=-\ell}^\ell \sum_{j=0}^\infty \psi_{j; \ell} a_{\ell, m; Z}(t-j)\ylm(x)$$ in $L^2(\mathbb{S}^2 \times\Omega)$ and $L^2(\Omega)$.
If $\{U(x,t), (x,t) \in \s \times \mathbb{Z}\}$ is an isotropic stationary solution of \eqref{defar}, then $a_{\ell, m; U}(t) = \int_{\mathbb{S}^2} U(x,t)\ylm(x) dx$ is a stationary solution of the standard $\ARp$ equation and, under Condition \ref{stationarity}, $$a_{\ell, m; U}(t)= \sum_{j=0}^\infty \psi_{\ell;j} a_{\ell, m; Z} (t-j), \qquad \text{in } L^2(\Omega).$$
Then, by stationarity and isotropy, $\mathbb{E} |a_{\ell, m; U}(t)|^2 = C_{\ell;U}$ and
\begin{align*}
& \Ex \left \| \sum_{\ell=0}^{L_k} \sum_{m=-\ell}^\ell a_{\ell, m; U} (t) \ylm - \sum_{\ell =0}^{L_k}\sum_{m=-\ell }^{\ell
}\sum_{j=0}^{k}\psi_{\ell;j}a_{\ell, m; Z}(t-j)\ylm \right \|_{\LSc}^2 \\
=& \sum_{\ell=0}^{L_k} \sum_{j=k+1}^\infty |\psi_{\ell;j}|^2 (2\ell + 1)  C_{\ell;U},
\end{align*}
which goes to zero as $k \to \infty$. Hence, by triangular inequality,
\begin{equation*}
\Ex \left \| U(\cdot,t) - \sum_{\ell =0}^{L_k}\sum_{m=-\ell }^{\ell
}\sum_{j=0}^{k}\psi_{\ell;j}a_{\ell, m; Z} (t-j) \ylm \right \|_{\LSc}^2
\to 0, \qquad k \to \infty. 
\end{equation*}
The same result holds in the sense of convergence in $L^2(\Omega)$, for every fixed pair $(x,t)$. Indeed, we have
\begin{align*}
&\mathbb{E} \left | \sum_{\ell=0}^{L_k} \sum_{m=-\ell}^\ell a_{\ell, m; U} (t) \ylm (x) - \sum_{\ell =0}^{L_k}\sum_{m=-\ell }^{\ell
}\sum_{j=0}^{k}\psi_{\ell;j}a_{\ell, m; Z}(t-j)\ylm(x) \right |^2 \\
=& \sum_{\ell=0}^{L_k}  \sum_{j=k+1}^\infty| \psi_{\ell;j}|^{2}   \frac{2\ell + 1}{4\pi}C_{\ell;U}.
\end{align*}
\end{proof}

\begin{proof} [Proof of Theorem \ref{theo::approx-ma}]
First observe that, under Condition \ref{cond::summability} $(i)$, $\forall \epsilon > 0$ there exists $L$ such that
$$
\sup_{\lambda \in [-\pi, \pi]} \left(  \sum_{\ell >L}  (2\ell+1)|f_\ell(\lambda)|^2 \right)^{1/2} \le \epsilon/2.
$$
Indeed, $\| f_\lambda\|_2 \le \sum_{t \in \mathbb{Z}} \| r_t \|_2$, then
$$
\left ( \sum_{\ell >L}  (2\ell+1)|f_\ell(\lambda)|^2 \right)^{1/2} \le \sum_{t=-\infty}^\infty \left (\sum_{\ell > L} (2\ell+1) C_\ell^2(t)\right )^{1/2} \le \sum_{t=-\infty}^\infty \left ( \sum_{\ell=0}^\infty (2\ell+1) C_\ell^2(t) \right)^{1/2} < \infty.
$$
From \cite[Theorem 4.4.3 and Corollary 4.4.1]{brockwelldavis}, take $\frac{\epsilon}{2(L+1)^2}$, then there exists an MA($q_\ell$) process with white noise variance $\sigma^2_\ell = (1+\itheta_{\ell;1}^2+\dots+\itheta_{\ell;q}^2)^{-1} \int_{-\pi}^{\pi} f_{\ell} (\lambda) d\lambda$ and spectral density $\tilde{f}_{\ell}$ such that
$$
| \tilde{f}_\ell(\lambda) - f_\ell(\lambda)| \le \frac{\epsilon}{2(L+1)^2}, \qquad \text{for all $\lambda \in [-\pi,\pi]$}.
$$
Now, define 
$$
\tilde{f}_\lambda(\langle x,y \rangle) =\sum_{\ell=0}^L \tilde{f}_\ell (\lambda) \frac{2\ell+1}{4\pi} P_\ell(\langle x,y \rangle),
$$ 
and $\tilde{\mathscr{F}}_\lambda$ the operator induced by right integration. Take $q = \max_{\ell \le L} q_\ell$. 
Then, we have
$$
\|\tilde{f}_{\lambda} - f_\lambda\|_2 \le \left( \sum_{\ell=0}^L (2\ell+1)| \tilde{f}_\ell(\lambda) - f_\ell(\lambda)|^2 \right)^{1/2}+ \left( \sum_{\ell >L}  (2\ell+1)|f_\ell(\lambda)|^2\right)^{1/2} \le \epsilon,
$$
uniformly over $\lambda  \in [-\pi,\pi]$.

Similarly, under Condition \ref{cond::summability} $(ii)$,
$$
\| \tilde{\mathscr{F}}_\lambda - \mathscr{F}_\lambda \|_{\operatorname{TR}}  \le \sum_{\ell=0}^L (2\ell+1)| \tilde{f}_\ell(\lambda) - f_\ell(\lambda)| + \sum_{\ell >L}  (2\ell+1)f_\ell(\lambda) \le \epsilon,
$$
for all $\lambda \in [-\pi,\pi]$.
\end{proof}

\begin{proof} [Proof of Theorem \ref{theo::approx-ar}] This proof follows the same lines of the previous one. Here, we make use of \cite[Theorem 4.4.3 and Corollary 4.4.2]{brockwelldavis}, which ensure that  for $\ell= 0,\dots,L$ there exists an AR($p_\ell$) process with spectral density $\tilde{f}_{\ell}$ such that
$$
| \tilde{f}_\ell(\lambda) - f_\ell(\lambda)| \le \frac{\epsilon}{2(L+1)^2}, \qquad \text{for all $\lambda \in [-\pi,\pi]$}.
$$
We then take $p = \max_{\ell \le L} p_\ell$.
\end{proof}

\begin{proof}[Proof of Lemma \ref{theo::wold}]
We just prove convergence in the $L^2(\Omega)$ sense; the $L^2(\s \times \Omega)$ argument is analogous. 
First of all, by Theorem 5.7.1 in \cite{brockwelldavis}, we can deduce that, for fixed $(\ell, m)$,
$$
a_{\ell, m}(t) = 
\sum_{j=0}^\infty \psi_{\ell;j} a_{\ell, m; Z} (t-j) + V_{\ell,m}(t), \qquad \text{in } L^2(\Omega),
$$
and
$$
\infty > \sum_\ell (2\ell+1) C_{\ell}(0) =\sum_{\ell} (2\ell+1) \sigma^2_\ell \sum_{j=0}^\infty \psi^2_{\ell;j} + \sum_\ell (2\ell+1) v_{\ell},
$$
were $v_\ell = \mathbb{E}|V_{\ell,m}(t)|^2$. In addition, since $a_{\ell, m;Z} (t) \in \mathcal{M}_{\ell,m;t}$ and $V_{\ell,m}(t) \in \mathcal{M}_{\ell,m;-\infty}$, it is clear that $\mathbb{E}[a_{\ell, m;Z} (t) \overline{a_{\ell ', m';Z}} (s)] = \sigma^2_\ell  \delta_t^s\delta_{\ell}^{\ell'} \delta_m^{m'}$, and $\Ex[a_{\ell,m;Z}(t)\overline{V_{\ell',m'}}(s)]=0$ for all $\ell,\ell',m,m',t,s$.

Hence, the sequence $\sum_{\ell=0}^L \sum_{m=-\ell}^\ell \psi_{\ell;j} a_{\ell, m; Z } (t-j) Y_{\ell,m}(x)$ is Cauchy in $L^2(\Omega)$, since for $L>L'$,
$$
\mathbb{E} \left | \sum_{\ell =L'+1}^L \psi_{\ell;j} a_{\ell, m; Z } (t-j) Y_{\ell,m}(x) \right |^2 =  \sum_{\ell =L'+1}^L \frac{2\ell+1}{4\pi} \psi^2_{\ell; j} \sigma^2_\ell \to 0
$$
and $\Psi_j Z(x, t - j)$ is well defined. Similarly $Z(x,t) := \sum_{\ell,m} a_{\ell, m;Z} (t) Y_{\ell,m}(x)$ and $V(x,t)$ are also well defined. 
Moreover, $\sum_{j} \Psi_j Z(x, t - j)$ is Cauchy, indeed for $J > J'$
\begin{align*}
\mathbb{E} \left | \sum_{j=J'+1}^J  \Psi_j Z(x, t - j) \right |^2 &= \lim_{L\to \infty } \mathbb{E} \left | \sum_{j=J'+1}^J \sum_{\ell=0}^L \sum_{m=-\ell}^\ell \psi_{\ell;j} a_{\ell, m; Z } (t-j) Y_{\ell,m}(x) \right |^2 \\ 
&= \sum_{j=J'+1}^J \sum_{\ell=0}^\infty \frac{2\ell+1}{4\pi} \psi^2_{\ell; j} \sigma^2_\ell \to 0
\end{align*}
Then, we have
\begin{align*}
&\mathbb{E} \left | T(x,t) -  \sum_{j=0}^J  \Psi_j Z(x, t - j) - V(x,t) \right |^2 \\=& \lim_{L\to \infty} \mathbb{E}  \left |\sum_{\ell=0}^L \sum_{m=-\ell}^\ell \left ( a_{\ell,m}(t)  - \sum_{j=0}^J \psi_{\ell;j} a_{\ell, m; Z } (t-j)  - V_{\ell,m}(t) \right ) Y_{\ell,m}(x) \right |^2\\
=& \sum_{\ell=0}^\infty \frac{2\ell+1}{4\pi} \sigma^2_\ell \sum_{j=J+1} \psi_{\ell;j}^2,
\end{align*}
and by Dominated Convergence Theorem we conclude the proof.
\end{proof}

\begin{proof}[Proof of Theorem \ref{theo::approx-l2}]
Under Condition \ref{cond::wold} $(ii)$, the Wold decomposition has no deterministic component, that is, $V(x,t) = 0$ for all $(x,t) \in \s \times \mathbb{Z}$.
Then, as a consequence of Lemma \ref{theo::wold}, for all $\epsilon > 0$, there exists integers $L$  and $q$ such that
$$
\mathbb{E}\left | T(x,t) -  Z(x,t) - \sum_{j=1}^q \Psi_{j;L} Z(x, t - j) \right |^2 \le \epsilon.
$$
Moreover, since $a_{\ell,m;Z}(t) \in \mathcal{M}_{\ell,m; t}$ for all $\ell,m,t$, the representation can be inverted:
$$
a_{\ell,m} (t) = \sum_{j=1}^\infty \phi_{\ell;j} a_{\ell,m}(t-j) + a_{\ell,m;Z} (t),
$$
with $\sum_{j=1}^\infty \phi^2_{\ell;j} < \infty$. Then, for all $\epsilon > 0$, there exist integers $L$ and $p$ such that
$$
\mathbb{E}\left | T(x,t) -  \sum_{j=1}^p \Phi_{j;L} T(x, t - j) - Z(x,t) \right |^2 \le \epsilon.
$$
Similarly, it is possible to show the $L^2(\s \times \Omega)$ approximation.
\end{proof}

\bibliographystyle{sapthesis} 
\bibliography{mybiblio} 
 
 \end{document}